\newtheorem{thm}{Theorem}[section]
\newtheorem{prop}[thm]{Proposition}
\newtheorem{defn}[thm]{Definition}
\newtheorem{lem}[thm]{Lemma}
\newtheorem{cor}[thm]{Corollary}
\renewcommand{\leq}{\leqslant}
\renewcommand{\geq}{\geqslant}
\title[Application of Character Estimates]{Application of Character Estimates to the Number of $T_2$-Systems of the Alternating Group}
\author{Stefan-Christoph Virchow}
\begin{document}
\begin{abstract} We use character theory and character estimates to show that the number of $T_2$-systems of $A_n$ is at least 
\begin{equation*}
\frac{1}{8n\sqrt{3}}\exp\left(\frac{2\pi}{\sqrt{6}}n^{1/2}\right)(1+o(1)). 
\end{equation*}
Applying  this result, we obtain a lower bound for the number of connected components of the product replacement graph $\Gamma_2(A_n)$.
\end{abstract}

\maketitle

\section{Introduction}
Let $G$ be a finite group and let $d(G)$ be the minimal number of generators of $G$. Fix some integer $k\geq d(G)$ and denote by

\begin{equation*}
\mathcal{N}_k(G):=\{(g_1, \ldots, g_k)\in G^k:~ \langle g_1, \ldots, g_k\rangle=G\}
\end{equation*}
the set of generating $k$-tuples of $G$. We identify $\mathcal{N}_k(G)$ with the set of epimorphisms $\operatorname{Epi}(F_k\twoheadrightarrow G)$, where $F_k$ is the free group on $k$ generators. Now consider the following group action
\begin{align*}
\bigl(\operatorname{Aut}(F_k)\times\ \operatorname {Aut}(G)\bigr)\times\operatorname{Epi}(F_k\twoheadrightarrow G)&\to \operatorname{Epi}(F_k\twoheadrightarrow G)\\
\bigl((\tau,\sigma),\phi\bigr) &\mapsto\sigma\circ\phi\circ\tau^{-1}.
\end{align*}
\textit{Systems of transitivity} or short \textit{$T_k$-systems} are defined to be the orbits of this group action. Denote by $\tau_k(G)$ the number of $T_k$-systems of $G$. B. H. Neumann and H. Neuman \cite{Ne1} introduced $T_k$-systems in 1950. Since then they have been widely investigated by Dunwoody \cite{Du1}, Evans \cite{Ev1}, Gilman \cite{Gi1}, Guralnick and Pak \cite{Gu1}, Neumann \cite{Ne2} and Pak \cite{Pa1}. \par
There has been renewed interest in $T_k$-systems in the recent years because they can be applied to the \textit{product replacement algorithm (PRA).} The PRA is a practical algorithm to generate random elements of finite groups. Celler, Leedham-Green, Murray, Niemeyer and O'Brien \cite{Ce1} introduced the PRA in 1995 and since then it has been widely studied (see \cite{Ba1}, \cite{Ga1}, \cite{Lu1}, \cite{Pa1}). The PRA is defined as follows: Let $(g_1, \ldots, g_k)\in\mathcal{N}_k(G)$ be a generating $k$-tuple of $G$. Define a move to another generating $k$-tuple in the following way: First select uniformly a pair  $(i,j)$ with $1\leq i\neq j\leq k$. Secondly, apply one of the following four operations with equal probability:
\begin{align*}
R_{i,j}^{\pm}&\colon (g_1, \ldots,g_i,\ldots,g_k)\to (g_1, \ldots,g_i\cdot g_j^{\pm 1},\ldots,g_k)\\
L_{i,j}^{\pm}&\colon (g_1, \ldots,g_i,\ldots,g_k)\to (g_1, \ldots,g_j^{\pm 1}\cdot g_i,\ldots,g_k).
\end{align*}
We apply the above moves several times (the choice of the moves must be uniform and independent at each step). Finally, we return a random component of the resulting generating $k$-tuple. This gives us the wanted ``random'' element of the group $G$.\\
The \textit{product replacement graph (PRA graph)} $\Gamma_k(G)$ is a graph with vertices corresponding to generating $k$-tuples of $G$ and edges corresponding to the moves $R_{i,j}^{\pm},~ L_{i,j}^{\pm}$. The PRA can be described as a random walk on this graph. Denote by $\varkappa_k(G)$ the number of connected components of $\Gamma_k(G)$.\par
The connection between the PRA and $T_k$-systems of $G$ is the following: Nielsen showed in \cite{Ni1} that $\operatorname {Aut}(F_k)$ is generated by the \textit{Nielsen moves}
\begin{align*}
R_{i,j}&\colon (x_1, \ldots,x_i,\ldots,x_k)\to (x_1, \ldots,x_i\cdot x_j,\ldots,x_k)\\
L_{i,j}&\colon (x_1, \ldots,x_i,\ldots,x_k)\to (x_1, \ldots,x_j\cdot x_i,\ldots,x_k)\\
P_{i,j}&\colon (x_1, \ldots,x_i,\ldots, x_j,\ldots,x_k)\to (x_1, \ldots,x_j,\ldots, x_i,\ldots,x_k)\\
I_{i}&\colon (x_1, \ldots,x_i,\ldots,x_k)\to (x_1, \ldots,x_i^{-1},\ldots,x_k)
\end{align*}
for $1\leq i\neq j\leq k$. Therefore, we can define a graph with vertices corresponding to $\mathcal{N}_k(G)$ and edges corresponding to the Nielsen moves and to the automorphisms of $G$. The connected components of this graph are exactly the $T_k$-systems of $G$. Now it follows immediately from the definitions that
\begin{equation}
\label{N-2-1-1}
\tau_k(G)\leq\varkappa_k(G).
\end{equation}\par
Little is known about the problem to estimate $\tau_k(G)$ as a function of $k$ and $G$. Of special interest are $T_k$-systems of a finite simple group $G$. A conjecture attributed to Wiegold states that $\tau_k(G)=1$ for $k\geq 3$ (see \cite[Conjecture 2.5.4]{Pa1}). Cooperman and Pak \cite{Co1}, David \cite{Da1}, Evans \cite{Ev1}, Garion \cite{Ga3} and Gilman \cite{Gi1} proved this conjecture for some families of simple groups. The case $k=2$, however, appears to be different. In 2009, S. Garion and A. Shalev \cite[Theorem 1.8]{Ga2} showed that $\tau_2(G)\to\infty$ as $|G|\to\infty$, where $G$ is a finite simple group. Hence, they confirmed a conjecture of Guralnick and Pak \cite{Gu1}. In addition they proved that
\begin{equation*}
\tau_2(A_n)\geq n^{(\frac{1}{2}-\epsilon)\log n}.
\end{equation*}
In 1985, M. J. Evans already established in his Ph.D. Thesis \cite{Ev2} that
\begin{align*}
\tau_2(A_{2n+4})&\geq\#\{C:~C \text{ is conjugacy class of } S_n\}\\
\tau_2(A_{2n+5})&\geq\#\{C:~C \text{ is conjugacy class of } S_n\}
\end{align*}
for $n\geq 2$. However, this thesis is hard to find and difficult to access.
\par
J.-C. Schlage-Puchta \cite{Sc1} applied character theory and character estimates to statistical problems for the symmetric group. Our aim is to show that these methods and concepts can be adopted to establish a lower bound for the number of $T_2$-systems of $A_n$. We claim the following:
\begin{thm}
\label{2-1-1}
Let $n$ be sufficiently large. Then we have
\begin{equation*}
\tau_2(A_n)\geq \frac{1}{8 n\sqrt{3}}\exp\left(\frac{2\pi}{\sqrt{6}}n^{1/2}\right)(1+o(1)).
\end{equation*}
\end{thm}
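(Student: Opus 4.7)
The plan is to introduce a $T_2$-invariant---namely the cycle type of the commutator $[x,y]$---and then to realize essentially every admissible cycle type via an honest generating pair, using character estimates in the spirit of Schlage-Puchta \cite{Sc1}. The justification of the invariant comes from the abelianization map $\mathrm{Aut}(F_2)\twoheadrightarrow GL_2(\mathbb{Z})$: any $\phi\in\mathrm{Aut}(F_2)$ sends $[a,b]$ to a conjugate of $[a,b]^{\det\phi}$, so for Nielsen-equivalent pairs $(x,y),(x',y')\in\mathcal{N}_2(A_n)$ the commutators $[x,y]$ and $[x',y']^{\pm 1}$ are $A_n$-conjugate. Composing with $\mathrm{Aut}(A_n)$, which is realized by $S_n$-conjugation for $n$ large, and using that cycle types are invariant under inversion, one concludes that the cycle type of $[x,y]$ is a $T_2$-invariant. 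Hence $\tau_2(A_n)\geq\#\{\lambda\vdash n:\lambda\text{ has an even number of even parts and is \emph{realizable}}\}$, where realizable means $\lambda$ arises as the cycle type of $[x,y]$ for some $(x,y)\in\mathcal{N}_2(A_n)$.

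To count realizable cycle types, for each admissible $\lambda$ (so that $C_\lambda\subseteq A_n$ is nonempty) I would apply the Frobenius commutator formula:
$$
\#\{(x,y)\in A_n^2:[x,y]\in C_\lambda\}\;=\;|C_\lambda|\,|A_n|\Bigl(1+\sum_{1\neq\chi\in\mathrm{Irr}(A_n)}\frac{\chi(C_\lambda)}{\chi(1)}\Bigr).
$$
Character bounds of Schlage-Puchta type, transferred from $S_n$ to $A_n$ by restriction of irreducibles, force the non-principal sum to be $o(1)$ uniformly over all but an $o(p(n))$-sized family of partitions. To upgrade ``pair with commutator in $C_\lambda$'' to ``generating pair'', I would then subtract the number of $(x,y)$ contained in a proper subgroup of $A_n$, bounded through the O'Nan--Scott classification of maximal subgroups together with the standard paucity estimates for non-generating pairs; the main term $|C_\lambda|\,|A_n|$ dominates this correction for all but a further negligible set of $\lambda$, yielding realizability for almost every admissible cycle type.

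Finally, I would count admissible partitions via the generating function identity
$$
\sum_{\lambda\text{ with even \#even parts}} q^{|\lambda|}\;=\;\tfrac12\Bigl(\prod_k(1-q^k)^{-1}+\prod_{k\text{ odd}}(1-q^k)^{-1}\prod_{k\text{ even}}(1+q^k)^{-1}\Bigr),
$$
from which, together with the Hardy--Ramanujan asymptotic $p(n)\sim\frac{1}{4n\sqrt{3}}\exp(\tfrac{2\pi}{\sqrt{6}}n^{1/2})$, one obtains that the number of admissible partitions is $\tfrac12 p(n)(1+o(1))\sim\frac{1}{8n\sqrt{3}}\exp(\tfrac{2\pi}{\sqrt{6}}n^{1/2})$, matching the claimed lower bound. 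The principal obstacle lies in the second step: the character estimates must be uniform across essentially all cycle types of $A_n$, and small-centralizer classes (e.g.\ those with many fixed points) require careful handling to ensure the non-generating pairs do not exhaust the Frobenius main term; treating the intransitive, imprimitive, and almost-simple maximal subgroups uniformly in $\lambda$ is the most delicate part of the argument.
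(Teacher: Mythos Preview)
Your proposal and the paper agree on the invariant (the $S_n$-cycle type of the commutator, via Higman/Nielsen) and on the final partition count, but the realization step is carried out quite differently. The paper does \emph{not} sum over all $(x,y)\in A_n^2$ via the Frobenius commutator formula. Instead it fixes a single element $\pi$ whose cycle type contains a $p$-cycle for a prime $p$ with $\tfrac{n}{2}<p\leq\tfrac{3n}{5}$, and counts $\sigma$ with $[\pi,\sigma]\in C$ via the class-multiplication formula, producing $\sum_{\chi\in\operatorname{Irr}(S_n)} \chi(\pi)^2\chi(C)/\chi(1)$. The factor $\chi(\pi)^2$ is decisive: by Murnaghan--Nakayama, $\chi^\lambda(\pi)=0$ unless the Young diagram of $\lambda$ is essentially a double hook, and $|\chi^\lambda(\pi)|\leq 2$ always. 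This prunes the sum to $O(n^3)$ surviving characters, each of degree at least $\frac{1}{24n^2}2^{n/8}$, so the M\"uller--Schlage-Puchta bound disposes of the error immediately---no appeal to the Witten zeta function of $A_n$ is needed. Equally important, the long prime cycle trivializes generation: by Jordan's theorem any primitive subgroup of $S_n$ containing a $p$-cycle with $p\leq n-3$ prime is $A_n$ or $S_n$, and $p>n/2$ forces transitive to imply primitive. Thus only \emph{intransitive} subgroups need be subtracted, and this is a short direct calculation (the paper's Lemmas~5.4--5.6); O'Nan--Scott is never invoked.

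Your route---essentially the Garion--Shalev strategy---can plausibly be pushed to the same bound, but it needs heavier inputs. In your character sum there are $p(n)\sim e^{c\sqrt n}$ terms, and the M\"uller--Schlage-Puchta exponent for a class with $\delta n$ fixed points is only $\frac{\log(1/\delta)}{32\log n}$, so you need uniform control of $\sum_{\chi\neq 1}\chi(1)^{-s}$ at $s=O(1/\log n)$, which is not immediate. Likewise, subtracting non-generating pairs through the full maximal-subgroup classification requires tracking the commutator constraint inside each family, not just crude $\sum_M|M|^2$ bounds (which can exceed $|C_\lambda|\,|A_n|$ when $|C_\lambda|$ is small). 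The paper's device of fixing $\pi$ with a long prime cycle buys both simplifications at once, at the price of a more hands-on Murnaghan--Nakayama and hook-length analysis; your approach is more conceptual and would transfer to other simple groups, but is less self-contained.
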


\noindent
Taking (\ref{N-2-1-1}) into account, this Theorem has the immediate

\begin{cor}
Let $n$ be sufficiently large. Then we have
\begin{equation*}
\varkappa_2(A_n)\geq \frac{1}{8 n\sqrt{3}}\exp\left(\frac{2\pi}{\sqrt{6}}n^{1/2}\right)(1+o(1)).
\end{equation*}
\end{cor}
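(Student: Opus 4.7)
The plan is to construct a $T_2$-system invariant from the commutator and show by character theory that this invariant takes close to $\tfrac12 p(n)$ distinct values on generating pairs, which matches the target bound via the Hardy-Ramanujan asymptotic.

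The first ingredient is the observation that the $\operatorname{Aut}(A_n)$-orbit of the $A_n$-conjugacy class of $[g_1,g_2]^{\pm 1}$ is a well-defined invariant of the $T_2$-system containing $(g_1,g_2)$. This rests on the classical Nielsen result that every $\phi\in\operatorname{Aut}(F_2)$ sends $[x_1,x_2]$ to a conjugate of $[x_1,x_2]^{\pm 1}$, so that Nielsen moves only conjugate or invert the commutator, while the $\operatorname{Aut}(A_n)$ factor acts by permuting conjugacy classes. Since $\operatorname{Out}(A_n)\cong\mathbb Z/2\mathbb Z$ for $n\neq 6$, the $\operatorname{Aut}(A_n)$-orbits on $A_n$-conjugacy classes are in natural bijection with $S_n$-conjugacy classes contained in $A_n$, whose number is $\tfrac 12 p(n)(1+o(1))$ by an elementary partition-function computation.

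The main analytic step is to use the classical Frobenius commutator formula
\begin{equation*}
\#\{(x,y)\in A_n^2:\ [x,y]\in C\}\;=\;|A_n|\,|C|\sum_{\chi\in\operatorname{Irr}(A_n)}\frac{\chi(C)}{\chi(1)},
\end{equation*}
in order to count pairs $(x,y)\in A_n^2$ whose commutator lies in a fixed conjugacy class $C\subseteq A_n$. The trivial character contributes the main term $|A_n|\,|C|$; the remaining tail is estimated by character bounds of the form $|\chi(g)|\leq\chi(1)^{1-\delta}$ of Fomin-Lulov, M\"uller and Schlage-Puchta type, combined with bounds on the series $\sum_{\chi\neq 1}\chi(1)^{-\delta}$ for $A_n$, exactly in the spirit of \cite{Sc1}. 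For all but a negligible family of classes $C$ this yields an asymptotic of $|A_n|\,|C|(1+o(1))$ for the number of commutator representations.

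The main obstacle, which I would address in a third step, is to subtract the non-generating pairs. A pair $(x,y)$ with $[x,y]\in C$ that fails to generate $A_n$ must lie in some maximal subgroup $M\lneq A_n$, and one needs a uniform bound on the total count of such pairs that is strictly smaller than the main term $|A_n|\,|C|$ for essentially all classes $C$. I expect this to be the delicate point, since uniformity has to be achieved simultaneously in the conjugacy class and in the maximal subgroup; it is carried out by invoking the O'Nan-Scott classification of maximal subgroups of $A_n$ together with Liebeck-Shalev-type index bounds and reapplying the same character estimates to each subgroup. Combining the three steps gives $\tau_2(A_n)\geq\tfrac 12 p(n)(1+o(1))$; the stated bound then follows by inserting the Hardy-Ramanujan asymptotic $p(n)\sim(4n\sqrt 3)^{-1}\exp(\pi\sqrt{2n/3})$ and using the identity $\pi\sqrt{2/3}=2\pi/\sqrt 6$.
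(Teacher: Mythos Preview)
Your overall architecture matches the paper's: commutator as a $T_2$-invariant via Nielsen/Higman, character theory to realise most $S_n$-classes as commutators of generating pairs, Hardy--Ramanujan to finish, and then the trivial inequality $\varkappa_2(A_n)\geq\tau_2(A_n)$ (which you should state explicitly, since the Corollary is about $\varkappa_2$, not $\tau_2$).

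Where you diverge is in the two analytic steps. You propose to count \emph{all} pairs via the Frobenius commutator formula over $\operatorname{Irr}(A_n)$ and then subtract non-generating pairs by summing over maximal subgroups using O'Nan--Scott and Liebeck--Shalev. The paper does something much more economical: it fixes a \emph{single} permutation $\pi$ whose cycle type contains a $p$-cycle for a prime $p\in(\tfrac12 n,\tfrac35 n]$ and only counts $\sigma$ with $[\pi,\sigma]\in C$. This buys two things. First, by Jordan's theorem any transitive subgroup of $S_n$ containing such a $\pi$ is already $A_n$ or $S_n$, so the entire maximal-subgroup analysis collapses to the intransitive case, which is handled by an elementary partition-counting argument (no O'Nan--Scott, no Liebeck--Shalev). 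Second, the Murnaghan--Nakayama rule forces $|\chi^\lambda(\pi)|\leq 2$ for every $\lambda\vdash n$, and severely restricts the $\lambda$ with $\chi^\lambda(\pi)\neq 0$; the relevant character sum is $\sum_\chi \chi(\pi)^2\chi(C)/\chi(1)$, so this makes the tail estimate almost trivial once one combines it with the M\"uller--Schlage-Puchta bound for $\chi(C)$.

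Your route is in principle workable (it is close in spirit to Garion--Shalev), but the step you yourself flag as delicate --- bounding $\sum_{M}\#\{(x,y)\in M^2:[x,y]\in C\}$ uniformly over all but $o(p(n))$ classes $C$ --- is only asserted, not carried out, and it is genuinely the hard part of that approach. The paper sidesteps it entirely by its choice of $\pi$, trading the classification of maximal subgroups for a single application of Jordan's theorem and a short combinatorial lemma on splittings of cycle types.
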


\section{Proof of Theorem \ref{2-1-1}}

At first we will show, that the number of $T_2$-systems of $A_n$ is bounded below by the number of conjugacy classes of $S_n$ which fulfill certain conditions. Secondly, we will count these conjugacy classes.\par
We start with a Lemma due to Higman (cf. \cite{Ne2}):

\begin{lem}
\label{2-2-1}
Let $G$ be a finite group with $d(G)\leq 2$. Then for each $T_2$-system $S$ of $G$ the set of Aut$(G)$-conjugates of $\{[g_1,g_2]^{\pm 1}\}$ is invariant for all $(g_1,g_2)\in S$.
\end{lem}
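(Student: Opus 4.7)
The plan is to observe that a $T_2$-system is by definition an orbit under the product group $\operatorname{Aut}(F_2)\times\operatorname{Aut}(G)$, and then use the fact (Nielsen) quoted earlier in the introduction that $\operatorname{Aut}(F_2)$ is generated by the Nielsen moves $R_{i,j}$, $L_{i,j}$, $P_{i,j}$, $I_i$ (here $i,j\in\{1,2\}$). It is therefore enough to check that applying any one of these generators to $(g_1,g_2)$, or applying some $\sigma\in\operatorname{Aut}(G)$ to both coordinates, sends $[g_1,g_2]^{\pm 1}$ into the set $\operatorname{Aut}(G)\cdot\{[g_1,g_2]^{\pm 1}\}$, because the latter set will then be preserved as $(g_1,g_2)$ moves through its $T_2$-system.

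The $\operatorname{Aut}(G)$-step is immediate from the identity $\sigma([g_1,g_2])=[\sigma(g_1),\sigma(g_2)]$, which shows that $\sigma$ maps $\operatorname{Aut}(G)\cdot\{[g_1,g_2]^{\pm 1}\}$ to itself. For the Nielsen moves I would simply compute the new commutator in each case. For instance, $R_{1,2}$ sends $(g_1,g_2)$ to $(g_1g_2,g_2)$ and one checks $[g_1g_2,g_2]=[g_1,g_2]$; $L_{1,2}$ sends $(g_1,g_2)$ to $(g_2g_1,g_2)$ and yields $[g_2g_1,g_2]=g_2[g_1,g_2]g_2^{-1}$, i.e.\ an inner-conjugate of $[g_1,g_2]$; $P_{1,2}$ swaps the coordinates and gives $[g_2,g_1]=[g_1,g_2]^{-1}$; and $I_1$ produces $[g_1^{-1},g_2]=g_1^{-1}[g_1,g_2]^{-1}g_1$, which is an inner-conjugate of $[g_1,g_2]^{-1}$. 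The remaining generators $R_{2,1},L_{2,1},I_2$ give analogous results by the same one-line calculations.

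In every case the new commutator lies in the $\operatorname{Inn}(G)$-orbit, hence in the $\operatorname{Aut}(G)$-orbit, of $[g_1,g_2]$ or $[g_1,g_2]^{-1}$, which is exactly what is asserted. There is no real obstacle here: the lemma is really a bookkeeping statement, and the only thing to be careful about is to run through all the Nielsen generators (and to remember that $\operatorname{Inn}(G)\subseteq\operatorname{Aut}(G)$, so that conjugation by an element of $G$ is admissible inside the $\operatorname{Aut}(G)$-orbit).
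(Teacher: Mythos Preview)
The paper does not actually prove this lemma; it only attributes it to Higman and cites Neumann \cite{Ne2}, so there is nothing in the paper to compare your argument against. Your proof is correct and is exactly the standard verification: reduce to the Nielsen generators of $\operatorname{Aut}(F_2)$ and to $\operatorname{Aut}(G)$ acting diagonally, and check case by case that each move sends $[g_1,g_2]$ to an $\operatorname{Inn}(G)$-conjugate of $[g_1,g_2]^{\pm 1}$. One tiny remark: your identity $[g_1g_2,g_2]=[g_1,g_2]$ for $R_{1,2}$ presumes the convention $[a,b]=aba^{-1}b^{-1}$; with the other convention you get a conjugate instead, but of course either way the conclusion (membership in the $\operatorname{Aut}(G)$-orbit of $\{[g_1,g_2]^{\pm1}\}$) is unaffected.
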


\noindent
Higman's Lemma leads us to

\begin{lem}
\label{2-2-2}
Denote $J:=\{[\pi,\sigma]:~(\pi,\sigma)\in\mathcal{N}_2(A_n)\}$. For all sufficiently large $n$ we have
\begin{equation*}
\tau_2(A_n)\geq \#\{C:~C \text{ is a conjugacy class of }S_n\text{ with } C\cap J\neq\emptyset\}.
\end{equation*}
\end{lem}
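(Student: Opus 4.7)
The plan is to construct a surjective map $\Phi$ from the set of $T_2$-systems of $A_n$ onto the set of $S_n$-conjugacy classes meeting $J$; the lemma follows at once from the existence of such a $\Phi$.

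Two standard facts about $A_n$ for $n$ sufficiently large (say $n\geq 7$) do most of the work: (i) $\operatorname{Aut}(A_n)=S_n$, acting on $A_n$ by conjugation; and (ii) every element $c\in A_n$ is $S_n$-conjugate to $c^{-1}$, because inversion preserves cycle type. Combining (i) and (ii), the $\operatorname{Aut}(A_n)$-orbit of a set of the form $\{c,c^{-1}\}$ is precisely the $S_n$-conjugacy class of $c$.

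Given a $T_2$-system $S$, I would pick any $(g_1,g_2)\in S$ and define $\Phi(S)$ to be the $S_n$-conjugacy class of $[g_1,g_2]$. Higman's Lemma (Lemma \ref{2-2-1}) ensures this is well-defined: for any other representative $(h_1,h_2)\in S$, the element $[h_1,h_2]$ lies in the $\operatorname{Aut}(A_n)$-orbit of $\{[g_1,g_2]^{\pm 1}\}$, and by the observation above this orbit is exactly the $S_n$-class of $[g_1,g_2]$. Since $[g_1,g_2]\in J$ by definition, $\Phi(S)$ meets $J$.

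Surjectivity is then immediate: given any $S_n$-class $C$ with $C\cap J\neq\emptyset$, choose $c\in C\cap J$ and write $c=[g_1,g_2]$ for some $(g_1,g_2)\in\mathcal{N}_2(A_n)$; the $T_2$-system of $(g_1,g_2)$ is sent to $C$ by $\Phi$. Hence $\tau_2(A_n)\geq\#\operatorname{im}\Phi$, which equals the right-hand side of the claimed inequality. The only substantive ingredient is Higman's Lemma; the rest amounts to using $\operatorname{Aut}(A_n)=S_n$ together with the fact that an element of $S_n$ and its inverse share the same cycle type, so no genuine obstacle arises.
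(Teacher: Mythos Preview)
Your proof is correct and follows essentially the same approach as the paper: both use $\operatorname{Aut}(A_n)=S_n$ for $n>6$, invoke Higman's Lemma to see that the $S_n$-class of $[g_1,g_2]$ (together with its inverse class) is an invariant of the $T_2$-system, and then note that $c$ and $c^{-1}$ share a cycle type so this invariant is simply the single $S_n$-class. The only cosmetic difference is that you phrase the argument as constructing an explicit surjection $\Phi$, whereas the paper writes the resulting inequality $\tau_2(A_n)\geq \#\{K\cup K^{-1}:K\in\mathcal{B}\}=|\mathcal{B}|$ directly.
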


\begin{proof} Note Aut$(A_n)=\{\alpha|_{A_n}:~~\alpha\in\operatorname{Inn}(S_n)\}$ for $n> 6$ (cf. \cite[Theorem 8.2A]{Di1}). Therefore
\begin{align*}
\mathcal{B}:=&\{K:~K \text{ is } \operatorname{Aut}(A_n)\text{-conjugacy class with } K\cap J\neq\emptyset\}\\
=&\{C:~C \text{ is a conjugacy class of }S_n\text{ with } C\cap J\neq\emptyset\}.
\end{align*}
We have $d(A_n)\leq 2$ (cf. \cite{Di2}). Hence Lemma \ref{2-2-1} yields
\begin{equation*}
\tau_2(A_n)\geq \#\{K\cup K^{-1}: K\in\mathcal{B}\}=|\mathcal{B}|
\end{equation*}
as desired.
\end{proof}

\begin{defn}
Let $C$ be a conjugacy class of $S_n$. Denote by $f(C)$ the number of fixed points of $C$.
\end{defn}
\noindent
We state the following basic result:

\begin{thm}
\label{2-2-3}
Let $p$ denote a prime with $\frac{1}{2}n<p\leq \frac{3}{5}n$. Let $\pi\in A_n$ be a fixed permutation with cycle type 
\begin{equation*}
\lambda =
\begin{cases}
(p,~n-p-2,~1,~1), &\text{if } ~n\equiv 0(2)\\
(p,~n-p-2,~2), &\text{if } ~n\equiv 1(2).
\end{cases}
\end{equation*}
In addition suppose that $C\subset A_n$ is a conjugacy class of $S_n$ such that
\begin{equation*}
f(C)\leq \delta\cdot n\qquad \text{ for } \delta\in(0,\tfrac{1}{4}].
\end{equation*}
Then we obtain for all sufficiently large $n$
\begin{equation*}
A:=\#\{\sigma\in A_n:~[\pi,\sigma]\in C \wedge \langle\pi,\sigma\rangle=A_n\}=|C|\left(1+\mathcal{O}(\delta)\right).
\end{equation*}
In particular, we have $A>0$ for $\delta>0$ sufficiently small.
\end{thm}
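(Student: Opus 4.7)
The plan is to compute $A$ in two stages: first drop the generation requirement and evaluate $B := \#\{\sigma \in A_n : [\pi,\sigma] \in C\}$ by character theory, then subtract the pairs that fail to generate $A_n$. For the first stage, fix $c_0 \in C$ and use the Frobenius identity
\begin{equation*}
\sum_{\sigma \in A_n} \chi\bigl(\pi^{-1}\sigma^{-1}\pi\sigma\bigr) \;=\; \frac{|A_n|\,|\chi(\pi)|^2}{\chi(1)}, \qquad \chi \in \operatorname{Irr}(A_n),
\end{equation*}
together with the expansion of the class indicator $\mathbf{1}_C$ in $A_n$-characters, to obtain
\begin{equation*}
B \;=\; |C|\sum_{\chi \in \operatorname{Irr}(A_n)} \frac{|\chi(\pi)|^2\,\overline{\chi(c_0)}}{\chi(1)}.
\end{equation*}
The trivial character contributes exactly $|C|$, so the task reduces to bounding the tail $\Sigma := \sum_{\chi \neq 1} |\chi(\pi)|^2|\chi(c_0)|/\chi(1)$ by $O(\delta)$.

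To estimate $\Sigma$ I would lift the sum to $S_n$-characters $\chi^\lambda$ and combine two independent restrictions. First, since $\pi$ contains a $p$-cycle with $p > n/2$, the Murnaghan--Nakayama rule forces $\chi^\lambda(\pi) = 0$ unless $\lambda$ carries exactly one hook of length $p$; removing that hook expresses $|\chi^\lambda(\pi)|$ as the absolute value of a character of $S_{n-p}$ evaluated at the cycle type $(n-p-2,1,1)$ or $(n-p-2,2)$, which one controls by iterating Murnaghan--Nakayama. Second, the hypothesis $f(C) \leq \delta n$ puts $c_0$ into the regime covered by the Schlage--Puchta character estimates of \cite{Sc1}: for every non-trivial $\lambda$ one has a bound of the form $|\chi^\lambda(c_0)|/\chi^\lambda(1) \ll \chi^\lambda(1)^{-\eta(\delta)}$ with $\eta(\delta)>0$. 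Splitting the sum dyadically by the size of $\chi^\lambda(1)$ and using the Liebeck--Shalev summability $\sum_{\lambda \neq (n),(1^n)} \chi^\lambda(1)^{-s} = o(1)$ as $n \to \infty$ for $s > 2/3$ then yields $\Sigma = O(\delta)$, i.e.\ the desired contribution to $B = |C|(1+O(\delta))$.

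For the second stage, write $A = B - B'$ with $B' := \#\{\sigma \in A_n : [\pi,\sigma]\in C,\ \langle\pi,\sigma\rangle \neq A_n\}$, and control $B'$ by summing over the maximal proper subgroups $M < A_n$ containing $\pi$. Since $\pi$ has a $p$-cycle with $n/2 < p \leq n-3$, Jordan's theorem excludes every primitive proper $M$, and the divisibility obstruction $p > n/2$ excludes the imprimitive ones (neither a block size nor the number of blocks can be a multiple of $p$). Thus $M \subseteq (S_a \times S_{n-a}) \cap A_n$ where $a$ is a union of the three orbits of $\langle \pi \rangle$, leaving only a bounded list of candidates. For each the cycle structure of $C$ versus the orbit partition of $\pi$ is usually incompatible and $C \cap M = \emptyset$; in the few compatible cases, the Frobenius formula applied inside $M$ combined with $|C \cap M|/|M| = o(|C|/|A_n|)$ gives $B' = o(|C|)$, which is absorbed into the $O(\delta)$ error.

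The decisive obstacle is the character bound in the second paragraph: a soft $o(1)$ tail is not sufficient, one needs a bound that is \emph{linear} in $\delta$ so that the conclusion has the advertised shape $|C|(1+O(\delta))$. This forces the sharp Schlage--Puchta estimates rather than cruder general tools, and requires careful bookkeeping when passing between $S_n$-characters and $A_n$-characters, especially for self-conjugate $\lambda$ where the restriction splits and $C$ may subdivide into two $A_n$-classes that must be tracked separately.
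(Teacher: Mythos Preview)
Your two-stage architecture matches the paper's, and the second stage (Jordan plus the intransitive case analysis) is right in spirit. The gap is in the first stage, exactly at the point you yourself flag as decisive.

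The M\"uller--Schlage-Puchta bound $|\chi^\lambda(c_0)| \le \chi^\lambda(1)^{\,1-\log(1/\delta)/(32\log n)}$ applied to the standard character $\chi^{(n-1,1)}$ of degree $n-1$ gives
\[
\frac{|\chi^{(n-1,1)}(c_0)|}{\chi^{(n-1,1)}(1)} \;\le\; (n-1)^{-\log(1/\delta)/(32\log n)} \;\approx\; \delta^{1/32},
\]
not $O(\delta)$. No dyadic splitting or Liebeck--Shalev summability repairs this: the obstruction is a single low-degree character, not the accumulation of many terms. The paper's remedy is to treat $\lambda=(n-1,1)$ and $(2,1^{n-2})$ separately via the exact formula $\chi^{(n-1,1)}(C)=f(C)-1$, so that $|\chi(C)|/\chi(1)\le (\delta n)/(n-1)\ll\delta$; \emph{this} is where the linear $\delta$ comes from. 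For every remaining $\lambda$ with $\chi^\lambda(\pi)\ne 0$ the paper shows $\chi^\lambda(1)\ge (24n^2)^{-1}2^{n/8}$, by combining the hook formula with the shape constraints that the $p$-hook removal imposes on $\lambda$; on these the character bound then gives an exponentially small contribution. The Murnaghan--Nakayama analysis also yields $|\chi^\lambda(\pi)|\le 2$ uniformly, a fact you do not invoke.

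Two further points. First, the paper avoids your $A_n$-versus-$S_n$ bookkeeping entirely: since the $S_n$-class of $\pi$ does not split in $A_n$, the count over $\sigma\in A_n$ is $\tfrac12$ of an $S_n$-count and one sums directly over $\operatorname{Irr}(S_n)$. Second, your $B'=o(|C|)$ is too weak for a uniform $O(\delta)$; the paper obtains $B'=O(\delta|C|)$ by a combinatorial estimate for $\sum_{C_1C_2=C}|C_1||C_2|$ over the relevant factorizations, combined with analogues of the character computation carried out inside each intransitive block.
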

\noindent
We will prove this Theorem in the next sections. In view of the previous conclusions, our problem results in counting conjugacy classes of $S_n$ or partitions of $n$. 

\begin{defn}
A \emph{partition} of $n$ is a sequence $\lambda=(\lambda_1,\lambda_2,\ldots,\lambda_l)$ of positive integers such that $\lambda_1\geq\lambda_2\geq\ldots\geq\lambda_l$ and $\lambda_1+\lambda_2+\ldots+\lambda_l=n$. We write $\lambda\vdash n$ to indicate that $\lambda$ is a partition of $n$.\\
Denote by $P(n)$ the number of partitions of $n$.\\
Suppose $\lambda=(\lambda_1,\lambda_2,\ldots,\lambda_l)\vdash n$. The \emph{Ferrers diagram} of $\lambda$ is an array of $n$ boxes having $l$ left-justified rows with row $i$ containing $\lambda_i$ boxes for $1\leq i\leq l$.
\end{defn}
\noindent
Hardy and Ramanujan \cite{Ha1} achieved the subsequent asymptotic formula:

\begin{lem}
\label{2-2-4}
We have
\begin{equation*}
P(n)=\frac{1}{4n\sqrt{3}}\exp\left(\frac{2\pi}{\sqrt{6}}n^{1/2}\right)(1+o(1)).
\end{equation*}
\end{lem}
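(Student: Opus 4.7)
The plan is to follow the Hardy--Ramanujan saddle-point analysis of the generating function. Recall that
\begin{equation*}
F(q):=\sum_{n\geq 0}P(n)q^n=\prod_{k=1}^{\infty}\frac{1}{1-q^k},\qquad |q|<1,
\end{equation*}
so by Cauchy's formula, for any $0<r<1$,
\begin{equation*}
P(n)=\frac{1}{2\pi i}\oint_{|q|=r}\frac{F(q)}{q^{n+1}}\,dq=\frac{e^{nt}}{2\pi}\int_{-\pi}^{\pi}F(e^{-t+i\theta})e^{-in\theta}\,d\theta,
\end{equation*}
after the substitution $q=e^{-t+i\theta}$ with $r=e^{-t}$.

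The first key input is the behavior of $F$ near $q=1$. Using Euler--Maclaurin (or, equivalently, the modular transformation law for the Dedekind $\eta$-function) applied to $\log F(e^{-t})=-\sum_{k\geq 1}\log(1-e^{-tk})$, one obtains
\begin{equation*}
\log F(e^{-t})=\frac{\pi^2}{6t}-\tfrac{1}{2}\log\!\frac{2\pi}{t}-\frac{t}{24}+O\!\bigl(e^{-c/t}\bigr),\qquad t\to 0^{+}.
\end{equation*}
Balancing the leading competing terms $\pi^2/(6t)$ and $nt$ in $\log(F(e^{-t})e^{nt})$ fixes the saddle point at
\begin{equation*}
t_{0}=\frac{\pi}{\sqrt{6n}}.
\end{equation*}

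Next I would perform the saddle-point evaluation on the arc $|q|=e^{-t_{0}}$. Expanding $\log F(e^{-t_{0}+i\theta})-in\theta$ to second order in $\theta$, a short computation gives $\partial_{\theta}^{2}\log F(e^{-t_{0}+i\theta})\big|_{\theta=0}=-\pi^{2}/(3t_{0}^{3})(1+o(1))$, so Gaussian integration near $\theta=0$ produces a factor $\sqrt{6t_{0}^{3}/\pi}$. Combining with $F(e^{-t_{0}})\sim\sqrt{t_{0}/(2\pi)}\exp(\pi^{2}/(6t_{0}))$ and $e^{nt_{0}}$ yields exponent $\pi^{2}/(6t_{0})+nt_{0}=2\pi\sqrt{n/6}=(2\pi/\sqrt{6})\,n^{1/2}$ and prefactor
\begin{equation*}
\frac{1}{2\pi}\sqrt{\frac{t_{0}}{2\pi}}\sqrt{\frac{6t_{0}^{3}}{\pi}}=\frac{\sqrt{3}\,t_{0}^{2}}{2\pi^{2}}=\frac{1}{4n\sqrt{3}},
\end{equation*}
which is exactly the claimed asymptotic.

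The main obstacle is the ``minor-arc'' estimate: one must show that the contribution to the integral from $|\theta|\geq t_{0}^{3/2}$ (say) is of smaller order than the saddle-point contribution. For this one uses that if $\theta$ is not close to a rational $2\pi a/b$ with small denominator, then a positive proportion of the factors $|1-e^{(-t_{0}+i\theta)k}|$ are bounded below uniformly, forcing $|F(e^{-t_{0}+i\theta})|$ to be exponentially smaller than $F(e^{-t_{0}})$. Near other rationals one needs the transformation behavior of $\eta$ under the full modular group to rule out secondary saddles; this is precisely the Hardy--Ramanujan--Rademacher analysis, and for the leading-order asymptotic a single-arc bound suffices. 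Once these bounds are in place, the Gaussian contribution dominates and the lemma follows.
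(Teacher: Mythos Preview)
The paper does not give a proof of this lemma at all; it is simply stated as a classical result and attributed to Hardy and Ramanujan \cite{Ha1}. Your proposal goes well beyond what the paper does by outlining the actual saddle-point/circle-method argument, and the sketch you give is the standard one: the expansion of $\log F(e^{-t})$ near $t=0$, the choice $t_0=\pi/\sqrt{6n}$, the second derivative $-\pi^{2}/(3t_0^{3})$, and the resulting prefactor computation are all correct and do collapse to $1/(4n\sqrt{3})$.

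Two small remarks if you intend to turn the sketch into a self-contained proof. First, the asymptotic for $\log F(e^{-t})$ must be established not just for real $t\to 0^{+}$ but uniformly for complex $t$ in a sector $|\arg t|<\pi/2-\varepsilon$, since you apply it on the major arc $t=t_0-i\theta$; this is exactly what the $\eta$-transformation gives you. Second, your description of the minor-arc bound is accurate in spirit but is the genuinely nontrivial part of the argument; for the $(1+o(1))$ statement one can get away with a crude bound of the type $|F(e^{-t_0+i\theta})|\leq F(e^{-t_0})\exp(-c\theta^{2}/t_0^{3})$ on a suitable range, together with the $\eta$-transformation near each cusp, rather than the full Farey dissection. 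In the context of the paper, however, simply citing \cite{Ha1} is entirely appropriate.
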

\noindent
Now, we are able to give the

\begin{proof}[Proof of Theorem \ref{2-1-1}] At first, we note
\begin{align*}
\#\{\lambda\vdash n:~ &\text{number of even parts in }\lambda\text{ is odd}\}\\
=\#\{&\lambda\vdash n:~ \text{number of even parts in }\lambda\text{ is even}\}\\
&-\#\{\lambda\vdash n:~\lambda\text{ consists of distinct odd parts}\}
\end{align*}
(see \cite[Exercise 44]{An1}). Therefore, it follows
\begin{equation*}
\#\{C\subset A_n:~ C\text{ is a conjugacy class of }S_n\}\geq \frac{1}{2}\cdot P(n).
\end{equation*}
Secondly, choose $\delta>0$ sufficiently small. By Lemma \ref{2-2-2}, Theorem \ref{2-2-3} and the above inequality we obtain
\begin{align*}
\tau_2(A_n)&\geq \#\{C\subset A_n:~ C\text{ is a conjugacy class of } S_n\text{ with } f(C)\leq\delta \cdot n\}\\
&\geq \frac{1}{2}P(n)-P(n-\lceil \delta n\rceil).
\end{align*}
Applying Lemma \ref{2-2-4} yields our assertion.
\end{proof}

\section{Character Theory}
The rest of this paper is devoted to the proof of Theorem \ref{2-2-3}. In this section we review some results from character theory which are essential for our proof. We denote by $\operatorname{Irr}(S_n)$ the set of irreducible characters of $S_n$. For a conjugacy class $C$ of $S_n$ and $\chi\in\operatorname{Irr}(S_n)$ we write $\chi(C)$ to denote $\chi(\pi)$ for $\pi\in C$. 

\begin{lem}
\label{2-3-1}
Let $C_1$ and $C_2$ be conjugacy classes of $S_n$ and let $\tau\in S_n$. Then
\begin{equation*}
\#\{(x,y)\in C_1\times C_2:~xy=\tau\}=\frac{|C_1||C_2|}{n!}\sum_{\chi\in \operatorname{Irr}(S_n)}\frac{\chi(C_1)\chi(C_2)\chi(\tau^{-1})}{\chi (1)}.
\end{equation*}
\end{lem}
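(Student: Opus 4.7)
The plan is to work in the group algebra $\mathbb{C}[S_n]$ and exploit Schur's lemma via the class sums. First I would observe that the counting function
$$N(\tau) := \#\{(x,y)\in C_1\times C_2 : xy=\tau\}$$
is a class function on $S_n$, because $(x,y)\mapsto (gxg^{-1},gyg^{-1})$ is a bijection between the fibers over $\tau$ and over $g\tau g^{-1}$. Denoting the class sum by $\hat C := \sum_{x\in C} x$, this means that
$$\hat C_1\,\hat C_2 \;=\; \sum_{\tau\in S_n} N(\tau)\,\tau \;\in\; Z(\mathbb{C}[S_n]).$$

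Next I would apply each irreducible representation $\rho_\chi\colon S_n\to \operatorname{GL}(V_\chi)$. Since $\hat C$ is central, Schur's lemma forces $\rho_\chi(\hat C)=\omega_\chi(C)\cdot\operatorname{id}$; taking traces identifies the scalar as $\omega_\chi(C)=|C|\chi(C)/\chi(1)$. Applying $\rho_\chi$ to the identity above and taking traces gives
$$\sum_{\tau\in S_n} N(\tau)\,\chi(\tau) \;=\; \omega_\chi(C_1)\,\omega_\chi(C_2)\,\chi(1) \;=\; \frac{|C_1|\,|C_2|\,\chi(C_1)\,\chi(C_2)}{\chi(1)}.$$

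To finish, I would invert this linear system using column orthogonality of characters. Fix $\tau_0\in S_n$; multiply both sides by $\chi(\tau_0^{-1}) = \overline{\chi(\tau_0)}$ and sum over $\chi\in\operatorname{Irr}(S_n)$. The orthogonality relation $\sum_\chi \chi(\tau)\overline{\chi(\tau_0)} = |C_{S_n}(\tau_0)|\,\delta_{[\tau],[\tau_0]}$, combined with $|C_{S_n}(\tau_0)|\cdot|[\tau_0]| = n!$ and the fact that $N$ is constant on the class $[\tau_0]$, collapses the left-hand side to $N(\tau_0)\cdot n!$. Rearranging yields the claimed formula.

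This argument is standard bookkeeping rather than a substantive obstacle; the only step needing a moment of care is tracking the centralizer factor in the final column-orthogonality step so that the normalization works out precisely to $|C_1||C_2|/n!$, and confirming via $\overline{\chi(\tau)}=\chi(\tau^{-1})$ that the statement is consistent (it is in fact symmetric in $\tau$ and $\tau^{-1}$ because $S_n$ is ambivalent).
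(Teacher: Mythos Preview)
Your argument is correct and is precisely the standard derivation of this Frobenius-type counting formula via class sums, Schur's lemma, and the second orthogonality relation; the bookkeeping in the final step is handled correctly. The paper does not give its own proof at all---it simply cites \cite[Proposition 9.33]{Cu1} and \cite[Theorem 6.3.1]{Ke1}---and the proof found in those references is essentially the one you wrote down, so there is no meaningful divergence to discuss.
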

\begin{proof} See \cite[Proposition 9.33]{Cu1} or \cite[Theorem 6.3.1]{Ke1}.\end{proof}

Recall that the irreducible characters of $S_n$ are explicitly parametrized by the partitions of $n$. Denote by $\chi^{\lambda}$ the irreducible character of $S_n$ corresponding to the partition $\lambda$ of $n$.\par
We shall frequently apply the \textit{Murnaghan-Nakayama rule.} 
\begin{defn}
Let $\lambda\vdash n$ be a partition. A \emph{rim hook} $h$ is an edgewise connected part of the Ferrers diagram of $\lambda$, obtained by starting from a box at the right end of a row and at each step moving upwards or rightwards only, which can be removed to leave a proper  Ferrers diagram denoted by $\lambda\backslash h$. A $r$\emph{-rim hook} is a rim hook containing $r$ boxes.\\
The \emph{leg length} of a rim hook $h$ is
\begin{equation*}
ll(h):=(\text{the number of rows of }h)-1.
\end{equation*}
Let $\pi\in S_n$ be a permutation with cycle type $(1^{\alpha_1},\ldots,q^{\alpha_q},\ldots,n^{\alpha_n})$ and $\alpha_q\geq 1$. Denote $\pi\backslash q\in S_{n-q}$ a permutation with cycle type $(1^{\alpha_1},\ldots,q^{\alpha_q-1},\ldots,(n-q)^{\alpha_{n-q}})$.
\end{defn}

\begin{lem}[Murnaghan-Nakayama Rule]
\label{2-3-2}
Let $\lambda\vdash n$ be a partition. Suppose that $\pi\in S_n$ is a permutation which contains a $q$-cycle. Then we have
\begin{equation*}
\chi^{\lambda}(\pi)=\sum_{\substack{h\\q\text{-rim hook}\\\text{of }\lambda}}(-1)^{ll(h)}\chi^{\lambda\backslash h}(\pi\backslash q).
\end{equation*}
\end{lem}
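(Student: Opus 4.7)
The plan is to deduce the rule from the theory of symmetric functions by means of the Frobenius characteristic map. Recall that this map is an isometric isomorphism from the space of class functions on $S_n$ onto the degree-$n$ part of the ring of symmetric functions, sending each irreducible character $\chi^{\lambda}$ to the Schur function $s_{\lambda}$ and the indicator of the conjugacy class of cycle type $\mu$ to $z_{\mu}^{-1} p_{\mu}$, where $z_{\mu}$ denotes the standard centralizer normalization and $p_{\mu}$ the corresponding power-sum symmetric function. Expanding an arbitrary class function in the basis of irreducible characters and its image in the basis of Schur functions immediately yields the basic identity
\begin{equation*}
p_{\mu}=\sum_{\lambda\vdash n}\chi^{\lambda}(\mu)\,s_{\lambda}.
\end{equation*}

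Next I would write the cycle type of $\pi$ as $\nu\cup(q)$, where $\nu$ is the cycle type of $\pi\backslash q$, and invoke the factorisation $p_{\nu\cup(q)}=p_{q}\cdot p_{\nu}$ inside the ring of symmetric functions. Applying the basic identity to both sides yields
\begin{equation*}
\sum_{\lambda\vdash n}\chi^{\lambda}(\pi)\,s_{\lambda}\;=\;p_{q}\cdot\sum_{\nu\vdash n-q}\chi^{\nu}(\pi\backslash q)\,s_{\nu},
\end{equation*}
so that comparing coefficients of $s_{\lambda}$ on both sides reduces the lemma to the following rim-hook Pieri rule for power sums: for every partition $\nu$ of $n-q$,
\begin{equation*}
p_{q}\cdot s_{\nu}\;=\;\sum_{\lambda:\,\lambda/\nu\text{ is a }q\text{-rim hook}}(-1)^{ll(\lambda/\nu)}\,s_{\lambda}.
\end{equation*}

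The main obstacle is the verification of this Pieri-type rule. The cleanest approach employs the Jacobi--Trudi determinantal formula $s_{\nu}=\det(h_{\nu_{i}-i+j})$ together with the Newton recursion $q\, h_{q}=\sum_{k=1}^{q}p_{k}\,h_{q-k}$, which allows one to express $p_{q}\cdot h_{m}$ as an alternating combination of complete homogeneous symmetric functions. Multiplying the Jacobi--Trudi determinant row by row and expanding, one finds that contributions corresponding to disconnected skew shapes cancel because the associated determinants acquire a repeated row, the surviving contributions are in bijection with the admissible $q$-rim-hook extensions of $\nu$, and the sign $(-1)^{ll(\lambda/\nu)}$ arises from the row transpositions needed to restore the enlarged matrix to Jacobi--Trudi form. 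Once this Pieri rule is established, the Murnaghan--Nakayama formula follows by the coefficient comparison described above.
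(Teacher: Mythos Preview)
The paper does not actually prove this lemma; it simply cites Nakayama's original paper and Sagan's textbook. Your proposal therefore goes well beyond what the paper does: you outline a genuine proof via the Frobenius characteristic map and the ring of symmetric functions, reducing the rule to the ``rim-hook Pieri'' identity $p_q\cdot s_\nu=\sum_{\lambda}(-1)^{ll(\lambda/\nu)}s_\lambda$. This is a standard and correct strategy, found for instance in Macdonald or Stanley.

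Two small remarks. First, you use $\nu$ both for the fixed cycle type of $\pi\backslash q$ and as a summation index over partitions of $n-q$; rename one of them to avoid confusion. Second, your sketch of the Pieri rule via Jacobi--Trudi and the Newton recursion is the weakest link: the Newton identity $q\,h_q=\sum_{k=1}^q p_k h_{q-k}$ does not by itself give a clean closed form for $p_q\cdot h_m$, and the phrase ``multiplying the Jacobi--Trudi determinant row by row'' hides nontrivial bookkeeping. A more transparent route is to use the bialternant expression $s_\nu=\det(x_i^{\nu_j+N-j})/\det(x_i^{N-j})$: multiplying the numerator by $p_q=\sum_i x_i^q$ distributes over rows, each surviving reordering corresponds to a rim-hook extension, and the sign is exactly the permutation sign of that reordering, i.e.\ $(-1)^{ll}$. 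Either way the argument can be completed, but as written your Jacobi--Trudi paragraph would need more detail to stand on its own.
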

\begin{proof} See \cite[§9]{Na1} or \cite[Theorem 4.10.2]{Sa1}.\end{proof}

$\chi^{\lambda}(1)$, the dimension of the irreducible representation associated with $\lambda$, can be computed via the \textit{hook formula.} 

\begin{defn}
Let $\lambda\vdash n$ be a partition. Denote by
\begin{equation*}
H_{i,j}(\lambda):=
\begin{cases}
\{(i,j')\in\lambda:~j'\geq j\}\cup\{(i',j)\in\lambda:~i'\geq i\}, &\text{if }(i,j)\in\lambda\\
\emptyset, &\text{if }(i,j)\notin\lambda.
\end{cases}
\end{equation*}
the \emph{hook} of the box $(i,j)$ in $\lambda$. Here we write $(i,j)\in \lambda$ to indicate that $(i,j)$ is a box in the Ferrers diagram of $\lambda$.
\end{defn}

\begin{lem}[Hook Formula]
\label{2-3-3}
Let $\lambda\vdash n$ be a partition. Then we obtain
\begin{equation*}
\chi^{\lambda}(1)=\frac{n!}{\prod\limits_{(i,j)\in \lambda}|H_{i,j}(\lambda)|}.
\end{equation*}
\end{lem}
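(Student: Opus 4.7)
The plan is to prove the Hook Formula by induction on $n$, using the Murnaghan--Nakayama Rule (Lemma~\ref{2-3-2}) just established. The base case $n=1$ is immediate. For the inductive step, I would apply Lemma~\ref{2-3-2} to the identity permutation $\pi = 1_n$, which trivially contains a $1$-cycle, with $q = 1$. The $1$-rim hooks of $\lambda$ are exactly the removable corner boxes of its Ferrers diagram, each of leg length $0$, so Murnaghan--Nakayama specializes to the branching rule
\begin{equation*}
\chi^{\lambda}(1_n) \;=\; \sum_{\mu} \chi^{\mu}(1_{n-1}),
\end{equation*}
where $\mu \vdash n-1$ ranges over partitions obtained from $\lambda$ by deleting a single corner.

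By the induction hypothesis, each $\chi^{\mu}(1_{n-1})$ equals $(n-1)!/\prod_{(i,j)\in\mu}|H_{i,j}(\mu)|$, so it suffices to verify the combinatorial identity
\begin{equation*}
\frac{n!}{\prod_{(i,j)\in\lambda}|H_{i,j}(\lambda)|} \;=\; \sum_{\mu} \frac{(n-1)!}{\prod_{(i,j)\in\mu}|H_{i,j}(\mu)|}.
\end{equation*}
When a corner $c = (c_i, c_j)$ of $\lambda$ is removed, its own hook (of length $1$) disappears; the hook lengths of boxes strictly above $c$ in column $c_j$ and strictly left of $c$ in row $c_i$ each decrease by $1$; and all other hook lengths are unchanged. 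Substituting this into the previous display, dividing both sides by $n!/\prod_{(i,j)\in\lambda}|H_{i,j}(\lambda)|$, and simplifying, the identity reduces to
\begin{equation*}
\sum_{c}\frac{1}{n}\prod_{i' < c_i}\frac{|H_{i',c_j}(\lambda)|}{|H_{i',c_j}(\lambda)|-1}\prod_{j' < c_j}\frac{|H_{c_i,j'}(\lambda)|}{|H_{c_i,j'}(\lambda)|-1} \;=\; 1,
\end{equation*}
where the sum runs over the corners $c = (c_i,c_j)$ of $\lambda$.

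The main obstacle is establishing this purely combinatorial hook identity. I would follow the elegant probabilistic argument of Greene--Nijenhuis--Wilf: interpret the summand attached to $c$ as the probability that a so-called \emph{hook walk} terminates at $c$. The walk begins at a uniformly random box of $\lambda$ (supplying the factor $1/n$) and, at each step, jumps to a box chosen uniformly at random among the other members of the hook of the current box, halting when a corner is reached. A careful induction on the position of the walk shows that the corner $c$ is reached with precisely the probability given by the product above; since the walk must terminate at some corner, summing over $c$ yields total probability $1$. This closes the induction and establishes the Hook Formula.
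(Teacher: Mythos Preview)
Your argument is correct: the branching rule obtained from Murnaghan--Nakayama with $q=1$, combined with the Greene--Nijenhuis--Wilf hook-walk computation of the corner probabilities, is one of the standard and rigorous proofs of the Hook Formula.

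By contrast, the paper does not supply a proof at all; it simply cites \cite[Theorem~1]{Fr1} and \cite[Theorem~3.10.2]{Sa1}. The first reference is the original Frame--Robinson--Thrall argument, which proceeds via a determinantal formula for $\chi^\lambda(1)$ rather than via a probabilistic interpretation, while Sagan's book contains several proofs, including the GNW one you outline. So your route is genuinely different from (and more self-contained than) what the paper does: you actually prove the statement using tools already introduced in the paper (Lemma~\ref{2-3-2}), whereas the paper treats the Hook Formula as a black box from the literature. The only cost is that the hook-walk probability computation, though elegant, still requires its own short inductive verification that you have only indicated; if this were to be included in full, one would need to spell out why the product over the arm and leg of $c$ really equals the probability that the walk ends at $c$.
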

\begin{proof} See \cite[Theorem 1]{Fr1} or \cite[Theorem 3.10.2]{Sa1}.\end{proof}\par

Finally, we will use the following estimate, due to T. W. Müller and J.-C. Schlage-Puchta \cite[Theorem 1]{Mu1}. 

\begin{lem}
\label{2-3-4}
Let $\chi\in\operatorname{Irr}(S_n)$ be an irreducible character, let $n$ be sufficiently large, and let $\sigma\in S_n$ be an element with $k$ fixed points. Then we have
\begin{equation*}
|\chi(\sigma)|\leq \chi(1)^{1-\frac{\log(n/k)}{32 \log n}}.
\end{equation*}
\end{lem}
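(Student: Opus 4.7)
The plan is to apply the Murnaghan--Nakayama rule (Lemma~\ref{2-3-2}) iteratively to peel off the non-trivial cycles of $\sigma$, and then compare the resulting sum to $\chi^\lambda(1)$ via the hook length formula (Lemma~\ref{2-3-3}). Write $\lambda$ for the partition of $n$ corresponding to $\chi$, and let $q_1\geq q_2\geq\cdots\geq q_r\geq 2$ be the lengths of the non-trivial cycles of $\sigma$, so that $q_1+q_2+\cdots+q_r=n-k$. Iterating Murnaghan--Nakayama once for each $q_i$ yields
\begin{equation*}
|\chi^\lambda(\sigma)|\leq \sum_{T}\chi^{\mu(T)}(1),
\end{equation*}
where $T$ ranges over all sequences $(h_1,h_2,\ldots,h_r)$ of successively removable rim hooks of sizes $q_1,\ldots,q_r$ in $\lambda$, and $\mu(T)\vdash k$ is the partition remaining after all removals.

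Next, I would rewrite the right-hand side as a sum over rim-hook tableaux of $\lambda$ with content determined by the $q_i$'s, and use the hook formula to relate each $\chi^{\mu(T)}(1)$ back to $\chi^\lambda(1)$ through the hook lengths of the boxes traversed by the $h_i$. The key combinatorial observation is that, at each step, summing over all $q$-rim hooks in the current shape introduces at most a factor of order $\binom{|\mu|}{q}/q$ compared with the box-by-box construction corresponding to the trivial cycle case $q=1$. Telescoping over the $r$ rim-hook removals bounds $|\chi^\lambda(\sigma)|/\chi^\lambda(1)$ by a product of ``saving'' factors, one for each non-trivial cycle of $\sigma$, each depending only on $q_i$ and on the intermediate shape.

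The hard part, and the place where the constant $32$ enters, is converting this telescoping product into a bound of the form $\chi^\lambda(1)^{-\log(n/k)/(32\log n)}$ \emph{uniformly} in $\lambda$. My approach would be to interpolate between two benchmarks: the Fomin--Lulov inequality $|\chi^\lambda(\sigma)|\leq\chi^\lambda(1)^{1/q}$ valid when $\sigma$ is (essentially) a single $q$-cycle, and the trivial estimate when $\lambda$ is close to a hook or a rectangle. Using the constraint $q_1+\cdots+q_r=n-k$ together with a convexity argument over the cycle-length distribution, combined with the uniform dimension bound $\chi^\lambda(1)\leq\sqrt{n!}$, one extracts an exponent decay of order $\log(n/k)/\log n$ rather than $(n-k)/n$.

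The main obstacle will be sharp control of the constant in the regime where $\lambda$ is \emph{balanced} (so $\log\chi^\lambda(1)\asymp n\log n$) and $\sigma$ consists of many cycles of bounded length: here the per-cycle saving is individually tiny, and one needs a carefully weighted sum over cycle lengths to recover a bound that is not merely $\chi^\lambda(1)^{1-O(1/n)}$. Handling this regime, together with the near-rectangular case where the hook-length estimates degenerate, is what forces the factor $32$ in the denominator and what appears to require the full machinery of \cite{Mu1}.
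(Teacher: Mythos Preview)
The paper does not prove this lemma: it is quoted as Theorem~1 of M\"uller and Schlage-Puchta~\cite{Mu1}, with the attribution given in the sentence immediately preceding the statement and no proof supplied. There is therefore no argument in the paper to compare your proposal against; what you are attempting is a reconstruction of the content of~\cite{Mu1} itself, which the present paper treats as a black box.

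For what it is worth, your outline---iterated Murnaghan--Nakayama removal of the non-trivial cycles, followed by a comparison of the resulting sum with $\chi^\lambda(1)$ through hook lengths---is indeed the skeleton of the argument in~\cite{Mu1}. Two caveats. First, your formulation of the Fomin--Lulov inequality is not quite right: that bound applies when \emph{all} cycles of $\sigma$ have the same length~$q$ (so $q\mid n$), not when $\sigma$ is a single $q$-cycle with fixed points, and it carries a nontrivial prefactor rather than being a clean $\chi^\lambda(1)^{1/q}$. Second, the passage you label ``the hard part''---turning the telescoping product of per-cycle savings into a uniform exponent $\log(n/k)/(32\log n)$ valid for all $\lambda$---is precisely the technical substance of~\cite{Mu1}; you have identified where the work lies but not supplied it. What you have written is a plausible roadmap rather than a proof, and for the purposes of this paper the citation is the appropriate course.
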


\section{Character Estimates}

In this section we will count permutations $\sigma$ which fulfill $[\pi,\sigma]\in C$ for a conjugacy class $C$ of $S_m$ and a fixed $\pi\in S_m$. The proof of Theorem \ref{2-2-3} is based on the outcomes established below.

\begin{prop}
\label{2-4-1}
Let $p$ denote a prime with $\frac{1}{2}n<p\leq \frac{5}{8}n$. Let $\pi\in A_n$ be a fixed permutation with cycle type 
\begin{equation*}
\lambda =
\begin{cases}
(p,~n-p-2,~1,~1), &\text{if } ~n\equiv 0(2)\\
(p,~n-p-2,~2), &\text{if } ~n\equiv 1(2).
\end{cases}
\end{equation*}
In addition suppose that $C\subset A_n$ is a conjugacy class of $S_n$ such that
\begin{equation*}
f(C)\leq \delta\cdot n\qquad \text{ for } \delta\in(0,1).
\end{equation*}
Then we have for all sufficiently large $n$
\begin{equation*}
\#\{\sigma\in A_n:~[\pi,\sigma]\in C \}=|C|(1+\mathcal{O}(\delta)).
\end{equation*}
\end{prop}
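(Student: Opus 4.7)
The plan is to convert the count into a character sum via Lemma~\ref{2-3-1}, peel off the main term coming from the trivial and sign characters, and control the rest by combining Lemma~\ref{2-3-4} with the Murnaghan--Nakayama rule. I would parametrize $\sigma \in A_n$ through $y := \sigma^{-1}\pi\sigma \in K := \pi^{S_n}$; in both cycle types the $S_n$-centralizer of $\pi$ contains an odd element (the transposition of the two fixed points when $n$ is even, the single $2$-cycle when $n$ is odd), so $|C_{A_n}(\pi)| = |C_{S_n}(\pi)|/2 = n!/(2|K|)$ and every fibre of the conjugation map $A_n \to K$ has this common size. Rewriting the condition $[\pi,\sigma] \in C$ as $\pi\tau \in K$ with $\tau \in C$, counting pairs $(x,\tau^{-1}) \in K \times C^{-1}$ with $x\tau^{-1} = \pi$ via Lemma~\ref{2-3-1}, and using that $S_n$-characters are real, I obtain
\begin{equation*}
\#\{\sigma \in A_n : [\pi,\sigma] \in C\} = \frac{|C|}{2}\sum_{\chi \in \operatorname{Irr}(S_n)} \frac{\chi(\pi)^2\,\chi(C)}{\chi(1)}.
\end{equation*}
The trivial character and the sign character each contribute $1$ (since $\pi \in A_n$ and $C \subset A_n$) and together produce the main term $|C|$, so the proposition reduces to showing
\begin{equation*}
\Sigma := \sum_{\chi \ne 1,\,\operatorname{sgn}} \frac{\chi(\pi)^2\,\chi(C)}{\chi(1)} = \mathcal{O}(\delta).
\end{equation*}

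For the error I would apply Lemma~\ref{2-3-4} to $\chi(C)$ with $\epsilon := \log(1/\delta)/(32\log n)$, obtaining $|\chi(C)| \le \chi(1)^{1-\epsilon}$ and reducing the task to bounding $\sum_{\chi \ne 1,\operatorname{sgn}} \chi(\pi)^2 \chi(1)^{-\epsilon}$. The hypothesis $p > n/2$ is crucial here: every $\lambda \vdash n$ carries at most one $p$-rim hook $h$, so Lemma~\ref{2-3-2} yields $|\chi^\lambda(\pi)| = |\chi^\mu(\pi')|$ with $\mu := \lambda\backslash h \vdash n-p$ (and $\chi^\lambda(\pi)=0$ when no $p$-rim hook exists), where $\pi' := \pi\backslash p$ has cycle type $(n-p-2,1,1)$ or $(n-p-2,2)$. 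Invoking Murnaghan--Nakayama a second time on the $(n-p-2)$-cycle (a cycle of length at least $3n/8 - 2$ thanks to $p \le 5n/8$) reduces $\chi^\mu(\pi')$ to a signed sum of a bounded number of $\chi^\nu$-values with $\nu \vdash 2$; in particular $|\chi^\lambda(\pi)|$ is uniformly bounded and non-zero only on a restricted family of $\lambda$.

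The main obstacle is that $\epsilon$ is too small for Lemma~\ref{2-3-4} alone to beat the low-dimensional characters: for instance $\chi^{(n-1,1)}$ only produces the bound $n^{-\epsilon} = \delta^{1/32}$, which is far from the required $\mathcal{O}(\delta)$. I would therefore split $\Sigma$ by dimension. For $\chi$ with $\chi(1) \ge n^{C}$ and a sufficiently large constant $C$, the power saving $\chi(1)^{-\epsilon}$ combined with the orthogonality estimate $\sum_\chi \chi(\pi)^2 = |C_{S_n}(\pi)| = \mathcal{O}(n^2)$ gives a total contribution of $\mathcal{O}(n^{2-C\epsilon}) = o(\delta)$. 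For the remaining low-dimensional characters, which by the classification of characters of small degree of $S_n$ correspond to $\lambda$ or $\lambda^t$ of hook-like shape $(n-r,\nu)$ with $r$ small, I would evaluate $\chi^\lambda(\pi)$ and $\chi^\lambda(C)$ explicitly through Murnaghan--Nakayama, using $f(C) \le \delta n$ to extract the extra factor of $\delta$; the simplest instance illustrates the mechanism, namely $|\chi^{(n-1,1)}(C)| = |f(C)-1| \le \delta n$ yields $\chi^{(n-1,1)}(\pi)^2\,\chi^{(n-1,1)}(C)/\chi^{(n-1,1)}(1) = \mathcal{O}(\delta)$ at once. The technical heart of the argument lies in handling the intermediate dimension range uniformly and in tracking the combinatorial count of partitions that contribute non-vanishing character values under the two successive rim-hook removals.
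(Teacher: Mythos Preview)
Your setup is correct and matches the paper: the reduction to the character sum
\[
\#\{\sigma\in A_n:[\pi,\sigma]\in C\}=\frac{|C|}{2}\sum_{\chi\in\operatorname{Irr}(S_n)}\frac{\chi(\pi)^2\chi(C)}{\chi(1)},
\]
the extraction of the main term from the two linear characters, and the intention to treat $(n-1,1)$ and $(2,1^{n-2})$ by hand via $\chi^{(n-1,1)}(C)=f(C)-1$ are exactly what the paper does.

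The genuine gap is in your ``high dimension'' estimate. With $\epsilon=\log(1/\delta)/(32\log n)$ and the threshold $\chi(1)\ge n^{C}$, you get $\chi(1)^{-\epsilon}\le n^{-C\epsilon}=\delta^{C/32}$, so your bound is
\[
\sum_{\chi(1)\ge n^C}\chi(\pi)^2\chi(1)^{-\epsilon}\le \delta^{C/32}\sum_{\chi}\chi(\pi)^2=\delta^{C/32}\,|C_{S_n}(\pi)|\asymp n^{2}\delta^{C/32}.
\]
For any fixed $\delta\in(0,1)$ this diverges as $n\to\infty$; it is not $o(\delta)$ (nor even $O(1)$). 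A polynomial dimension threshold cannot work here, because $\epsilon$ is only of size $1/\log n$. Consequently your ``intermediate range'' is not a residual technicality but the entire difficulty: you would need to show that every contributing $\lambda$ in the range $n^C\le\chi^\lambda(1)<e^{cn}$ is handled, and the orthogonality bound alone does not do this.

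The paper closes this gap by proving a structural lemma tailored to the cycle type of $\pi$: if $\chi^\lambda(\pi)\neq 0$, then at most two boxes of $\lambda$ lie outside $H_{1,1}(\lambda)\cup H_{2,2}(\lambda)$ (so only $O(n^3)$ partitions contribute at all), and for every such $\lambda\notin\{(n),(1^n),(n-1,1),(2,1^{n-2})\}$ one has the \emph{exponential} lower bound $\chi^\lambda(1)\ge \frac{1}{24n^2}2^{n/8}$. With this in hand, Lemma~\ref{2-3-4} gives
\[
\sum_{\lambda\in P_1,\ \chi^\lambda(\pi)\ne0}\chi^\lambda(1)^{-\epsilon}\ll n^3\exp\Bigl(\tfrac{\log\delta}{400}\tfrac{n}{\log n}\Bigr),
\]
which is negligible. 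The exponential dimension bound is obtained by a case analysis on $|H_{1,1}(\lambda)|$ using the hook formula and a second application of Murnaghan--Nakayama to the $(n-p-2)$-cycle to pin down the possible shapes; this is the step your plan alludes to but does not supply, and without it the argument does not close.
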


\begin{proof} Due to the prime number theorem, there exists a prime $p$ with $\frac{1}{2}n<p\leq \frac{5}{8}n$ for all sufficiently large $n$. Denote by $K_{\pi}$ the conjugacy class of $\pi$ in $S_n$. Because of the cycle type of $\pi$, $K_{\pi}$ remains a single conjugacy class inside $A_n$ (cf. \cite[pp. 16f.]{Wi1}). Taking into account that $\chi(\tau^{-1})=\overline{\chi(\tau)}=\chi(\tau)$, it follows with Lemma \ref{2-3-1}

\begin{align}
\notag
\#\{\sigma\in A_n:~[\pi,\sigma]\in C \}&=\sum_{\substack{(x,y)\in K_{\pi}\times C\\ xy=\pi}}\#\{\sigma\in A_n:~\sigma^{-1}\pi\sigma=x\}\\\notag
&=\frac{n!}{2|K_{\pi}|}\#\{(x,y)\in K_{\pi}\times C:~ xy=\pi\}\\
&=\frac{|C|}{2}\sum_{\chi\in\operatorname{Irr}(S_n)}\frac{\chi^2(\pi)\cdot\chi(C)}{\chi(1)}.
\label{N-2-4-1}
\end{align}
\noindent
The contribution of the linear characters to (\ref{N-2-4-1}) is $|C|$. This is the expected main term. We shall show that the remaining characters give terms, which can be absorbed into the error term.\par\smallskip

i) We state: \textit{Let $\lambda\vdash n$ with $\chi^{\lambda}(\pi)\neq 0$. Then the Ferrers diagram of $\lambda$ has at most two boxes which do not belong to $H_{1,1}(\lambda)\cup H_{2,2}(\lambda)$. Moreover, let $h$ be a $p$-rim hook of $\lambda$ with $\chi^{\lambda\backslash h}(\pi\backslash p)\neq 0$ or a $(n-p-2)$-rim hook of $\lambda$ with $\chi^{\lambda\backslash h}(\pi\backslash (n-p-2))\neq 0$. Then the Ferrers diagram of $\lambda\backslash h$ has at most two boxes which do not belong to $H_{1,1}(\lambda\backslash h)$. In particular, we have $|H_{1,1}(\lambda)|\geq p$.}\\
These results follow from the special cycle type of $\pi$ and the Murnaghan-Nakayama rule (Lemma \ref{2-3-2}).\par

ii) \textit{If $\lambda\vdash n$, then we have $|\chi^{\lambda}(\pi)|\leq 2$.}\\
You can see this inequality as follows: If $\chi^{\lambda}(\pi)=0$ there is nothing to show. Hence, let us assume that $\chi^{\lambda}(\pi)\neq 0$. Denote 
\begin{equation*}
R:=\{h:~ h\text{ is a }p\text{-rim hook of }\lambda\text{ with }\chi^{\lambda\backslash h}(\pi\backslash p)\neq 0\}.
\end{equation*}
We have $R\neq \emptyset$ as a result of the Murnaghan-Nakayama rule. Due to $|H_{1,1}(\lambda)|\geq p>\frac{1}{2}n$ (see i)) a rim hook $h\in R$ contains boxes of the hook $H_{1,1}(\lambda)$. Hence $|R|\leq 2$. Now let $h_1\in R$. Because of i) there is exactly one $(n-p-2)$-rim hook $h_2$ of $\lambda\backslash h_1$. Obviously, there exists exactly one $2$-rim hook of $(\lambda\backslash h_1)\backslash h_2$ and exactly one $1$-rim hook of $(\lambda\backslash h_1)\backslash h_2$.\\
The Murnaghan-Nakayama rule yields that $|\chi^{\lambda}(\pi)|$ is equal to
\begin{align*}
\begin{cases}
\Bigl|\sum\limits_{\substack{h_1\\p\text{-rim hook}\\ \text{of }\lambda}}~~\sum\limits_{\substack{h_2\\(n-p-2)\text{-rim hook}\\\text{of }\lambda\backslash h_1}}(-1)^{ll(h_1)+ll(h_2)}\Bigr|, & \text{if } n\equiv 0(2)\\
\Bigl|\sum\limits_{\substack{h_1\\p\text{-rim hook}\\\text{of }\lambda}}~~\sum\limits_{\substack{h_2\\(n-p-2)\text{-rim hook}\\\text{of }\lambda\backslash h_1}}~\sum\limits_{\substack{h_3\\2\text{-rim hook}\\\text{of }(\lambda\backslash h_1)\backslash h_2}}(-1)^{ll(h_1)+ll(h_2)+ll(h_3)}\Bigr|, & \text{if } n\equiv 1(2).
\end{cases}
\end{align*}
Therefore, it follows that $|\chi^{\lambda}(\pi)|\leq 2$.

iii) Let $\lambda\vdash n.$ Define
\begin{align*}
m_1:&=(\text{number of boxes in the first column of the Ferrers diagram of }\lambda)-1\\
m'_1:&=(\text{number of boxes in the first row of the Ferrers diagram of }\lambda)-1\\
m_2:&=(\text{number of boxes in the second column of the Ferrers diagram of }\lambda)-2\\
m'_2:&=(\text{number of boxes in the second row of the Ferrers diagram of }\lambda)-2.
\end{align*}
We claim the following: \textit{Let $\lambda\vdash n$ such that the Ferrers diagram of $\lambda$ contains the box $(2,2)$ and $\chi^{\lambda}(\pi)\neq 0$. Then we have}
\begin{equation*}
\chi^{\lambda}(1)\geq\frac{1}{24n^2}\binom{n}{|H_{1,1}(\lambda)|}\binom{|H_{1,1}(\lambda)|-1}{m_1}
\end{equation*}
\textit{and}
\begin{equation*}
\chi^{\lambda}(1)\geq\frac{1}{24n^2}\binom{n}{|H_{1,1}(\lambda)|}\binom{|H_{1,1}(\lambda)|-1}{m'_1}.
\end{equation*}
\textit{Furthermore, if $|H_{1,1}(\lambda)|\geq p+3$ then there exists an $i\in\{0,1,2\}$ such that}
\begin{equation*}
m_1=m_2+|H_{1,1}(\lambda)|-p-i\qquad \textit{or}\qquad m'_1=m'_2+|H_{1,1}(\lambda)|-p-i.
\end{equation*}
\textit{Proof.} Due to $m_1+m'_1=|H_{1,1}(\lambda)|-1$ the first two formulas are equivalent. In view of i) we have the three cases $|H_{3,3}(\lambda)|=0,~|H_{3,3}(\lambda)|=1$ and $|H_{3,3}(\lambda)|=2$. In each case, the hook formula (Lemma \ref{2-3-3}) yields
\begin{equation*}
\chi^{\lambda}(1)\geq\frac{1}{24n^2}\binom{n}{|H_{1,1}(\lambda)|}\binom{|H_{1,1}(\lambda)|-1}{m_1}\binom{n-|H_{1,1}(\lambda)|-1}{m_2}.
\end{equation*}
Hence our first result follows. We now turn to the second statement: It follows from the Murnaghan-Nakayama rule that there is a $(n-p-2)$-rim hook $h$ of $\lambda$ such that $\chi^{\lambda\backslash h}(\pi\backslash (n-p-2))\neq 0$. By assumption we have $|H_{1,1}(\lambda)|\geq p+3$. Therefore, $h$ contains either boxes from the first column or the first row of the Ferrers diagram of $\lambda$ - but not both at the same time. Without loss of generality we assume that the first occurs. With regard to i) we have $|H_{2,2}(\lambda\backslash h)|\in\{0,1,2\}$. If $|H_{2,2}(\lambda\backslash h)|=1$ we obtain $n-p-2=n-|H_{1,1}(\lambda)|-1+m_1-m_2$ and hence $m_1=m_2+|H_{1,1}(\lambda)|-p-1$. The other cases are analog. This proves our claim.\par

iv) We now observe: \textit{Let $\lambda\vdash n,~\lambda\neq (n),~\lambda\neq(1^n)$ such that $\chi^{\lambda}(\pi)\neq 0$ and the Ferrers diagram of $\lambda$ does not contain the box $(2,2)$.\\
1) If $\lambda\in\{(n-1,1), (2,1^{n-2})\}$ we obtain $\chi^{\lambda}(1)=n-1$.\\
2) If $\lambda\in\{(p+1,1^{n-p-1}), (n-p,1^{p})\}$ we have $\chi^{\lambda}(1)=\binom{n-1}{p}$.\\
3) If $\lambda\in\{(n-p-1,1^{p+1}), (p+2,1^{n-p-2})\}$ it follows $\chi^{\lambda}(1)=\binom{n-1}{p+1}$.\\
The cases specified above are the only ones that can occur.}\\
The computations for $\chi^{\lambda}(1)$ follow immediately from the hook formula. Furthermore, the Murnaghan-Nakayama rule yields: There exists a $p$-rim hook $h_1$ of $\lambda$ such that there is a $(n-p-2)$-rim hook of $\lambda\backslash h_1$. This proves the last statement.\par

v) \textit{Let $\lambda\vdash n$ such that $\lambda\notin\{(n), (1^n), (n-1,1), (2,1^{n-2})\}$ and $\chi^{\lambda}(\pi)\neq 0$. Then we obtain for all sufficiently large $n$}
\begin{equation*}
\chi^{\lambda}(1)\geq \frac{1}{24n^2}\cdot 2^{n/8}.
\end{equation*}
You can see the estimate as follows: If the Ferrers diagram of $\lambda$ does not contain the box $(2,2)$, then it follows from iv) 
\begin{equation*}
\chi^{\lambda}(1)\geq\binom{n-1}{p+1}\geq \left(\frac{n-1}{p+1}\right)^{p+1}\geq 2^{n/8}.
\end{equation*}
Now, let us assume that the Ferrers diagram of $\lambda$ contains the box $(2,2)$.\\
Case 1: $|H_{1,1}(\lambda)|\leq\frac{7}{8}n$\\
By applying iii) and i), we get 
\begin{equation*}
\chi^{\lambda}(1)\geq\frac{1}{24n^2}\binom{n}{|H_{1,1}(\lambda)|}\geq\frac{1}{24n^2}\left(\frac{n}{n-|H_{1,1}(\lambda)|}\right)^{n-|H_{1,1}(\lambda)|}\geq \frac{1}{24n^2}\cdot 2^{n/8}.
\end{equation*}
Case 2: $|H_{1,1}(\lambda)|\geq\frac{7}{8}n$ \\
Without loss of generality there is an  $i\in\{0,1,2\}$ such that $m_1=m_2+|H_{1,1}(\lambda)|-p-i~$ (see iii)). Because of $0\leq m_2\leq n-|H_{1,1}(\lambda)|-1$ we realize that $\frac{1}{8}n\leq m_1\leq |H_{1,1}(\lambda)|-1-\frac{1}{8}n.$ If $\frac{1}{8}n\leq m_1\leq \frac{1}{2}(|H_{1,1}(\lambda)|-1)$, we obtain with iii)
\begin{equation*}
\chi^{\lambda}(1)\geq \frac{1}{24n^2}\binom{|H_{1,1}(\lambda)|-1}{m_1}\geq \frac{1}{24n^2}\left(\frac{|H_{1,1}(\lambda)|-1}{m_1}\right)^{m_1}\geq \frac{1}{24n^2}\cdot 2^{n/8}.
\end{equation*}
The case $\frac{1}{2}(|H_{1,1}(\lambda)|-1)\leq m_1\leq |H_{1,1}(\lambda)|-1-\frac{1}{8}n$ is analog. Hence our result follows.\par

vi) \textit{We have $\chi^{(n-1,1)}(C)=f(C)-1$ and $|\chi^{(n-1,1)}(\sigma)|=|\chi^{(2,1^{n-2})}(\sigma)|$.}\\
See \cite[Example 2.3.8]{Sa1} and \cite[Lemma 3.6.10]{Ce2}.\par\smallskip

We return to our original problem. In view of (\ref{N-2-4-1}) it remains to show
\begin{equation*}
\sum_{\substack{\chi\in\operatorname{Irr}(S_n)\\\chi(1)\neq 1}}\frac{\chi^2(\pi)\cdot\chi(C)}{\chi(1)}=\mathcal{O}(\delta).
\end{equation*}
Let $P_1:=\{\lambda\vdash n:~\lambda\notin\{(n), (1^n), (n-1,1), (2,1^{n-2})\}\}$ and $P_2:=\{(n-1,1), \linebreak (2,1^{n-2})\}$. By applying Lemma \ref{2-3-4} and ii), we get

\begin{equation*}
\sum_{\substack{\chi\in\operatorname{Irr}(S_n)\\\chi(1)\neq 1}}\frac{\chi^2(\pi)\cdot\chi(C)}{\chi(1)}\ll\sum_{\substack{\lambda\in P_1\\\chi^{\lambda}(\pi)\neq 0}}\chi^{\lambda}(1)^{\frac{\log\delta}{32\log n}}+\sum_{\lambda\in P_2}\frac{|\chi^{\lambda}(C)|}{\chi^{\lambda}(1)}.
\end{equation*}
\noindent
We consider separately the two sums. i), iv) and v) yield
\begin{equation*}
\sum_{\substack{\lambda\in P_1\\\chi^{\lambda}(\pi)\neq 0}}\chi^{\lambda}(1)^{\frac{\log\delta}{32\log n}}\ll n^3\cdot\left(\frac{1}{24n^2}\cdot 2^{n/8}\right)^{\frac{\log \delta}{32 \log n}}\ll \exp\left(\frac{\log \delta}{400}\frac{n}{\log n}\right),
\end{equation*}
which is far smaller than necessary. Due to iv) and vi), we have
\begin{equation*}
\sum_{\lambda\in P_2}\frac{|\chi^{\lambda}(C)|}{\chi^{\lambda}(1)}\leq 2\frac{|f(C)-1|}{n-1}\ll \delta.
\end{equation*}
This completes the proof.\end{proof}\par

\noindent
Below we will state results similar to Proposition \ref{2-4-1} for different cycle types of $\pi$.

\begin{prop}
\label{2-4-2}
Let $p$ denote a prime with $\frac{1}{2}n<p\leq \frac{5}{8}n$. Let $\pi\in S_{n-1}$ be a fixed permutation with cycle type $\lambda =(p,~n-p-2,~1)$. In addition suppose that $C\subset A_{n-1}$ is a conjugacy class of $S_{n-1}$ such that
\begin{equation*}
f(C)\leq \delta\cdot (n-1)\qquad \text{ for } \delta\in(0,1).
\end{equation*}
Then we have for all sufficiently large $n$ 
\begin{equation*}
\#\{\sigma\in S_{n-1}:~[\pi,\sigma]\in C \}=|C|\biggl(2+\mathcal{O}\Bigl(\exp \Bigl(\frac{\log\delta}{400}\frac{n}{\log n}\Bigr)\Bigr)\biggr).
\end{equation*}
\end{prop}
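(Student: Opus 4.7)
The plan is to mirror the proof of Proposition \ref{2-4-1}, working in $S_{n-1}$ rather than in $A_n$. I begin by applying Lemma \ref{2-3-1} to the $S_{n-1}$-conjugacy class $K_{\pi}$ of $\pi$. Because $\sigma$ now ranges over the full symmetric group (not a subgroup of index two), the factor $\tfrac{1}{2}$ in (\ref{N-2-4-1}) disappears, and one obtains
\begin{equation*}
\#\{\sigma\in S_{n-1}:~[\pi,\sigma]\in C\} = |C|\sum_{\chi\in\operatorname{Irr}(S_{n-1})}\frac{\chi^2(\pi)\,\chi(C)}{\chi(1)}.
\end{equation*}

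The main term $2|C|$ comes from the two linear characters of $S_{n-1}$: the trivial character contributes $|C|$ tautologically, and the sign character contributes another $|C|$ because $\operatorname{sgn}(\pi)^2=1$ and $\operatorname{sgn}(C)=1$ (as $C\subset A_{n-1}$). For the remaining characters, steps (i), (ii) and (v) of the proof of Proposition \ref{2-4-1} carry over essentially unchanged: the dominant $p$-cycle with $p>(n-1)/2$ forces $|H_{1,1}(\lambda)|\geq p$ whenever $\chi^{\lambda}(\pi)\neq 0$, the Murnaghan--Nakayama rule yields $|\chi^{\lambda}(\pi)|\leq 2$, and the hook formula still gives $\chi^{\lambda}(1)\geq\tfrac{1}{24n^2}2^{n/8}$ outside a short list of exceptional $\lambda$.

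The feature which makes the error term here exponentially small is that $\pi$ has exactly one fixed point, so by step (vi) of the earlier proof
\begin{equation*}
\chi^{(n-2,1)}(\pi)=f(\pi)-1=0,\qquad\chi^{(2,1^{n-3})}(\pi)=0.
\end{equation*}
Consequently the analogue of the troublesome set $P_2=\{(n-1,1),(2,1^{n-2})\}$ --- which produced the $\mathcal{O}(\delta)$ contribution in Proposition \ref{2-4-1} --- is now empty. Combining $|\chi^{\lambda}(\pi)|\leq 2$ with Lemma \ref{2-3-4} applied to $\chi^{\lambda}(C)$ then yields, exactly as in Proposition \ref{2-4-1},
\begin{equation*}
\sum_{\substack{\chi\in\operatorname{Irr}(S_{n-1})\\\chi(1)\neq 1}}\frac{\chi^2(\pi)\,\chi(C)}{\chi(1)}\ll\exp\Bigl(\tfrac{\log\delta}{400}\,\tfrac{n}{\log n}\Bigr),
\end{equation*}
which is exactly the claimed error.

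The only technical work is re-verifying parts (i) and (iii) of the Proposition \ref{2-4-1} argument for the cycle type $(p,n-p-2,1)$. This is in fact easier than the previous cases: after removing a $p$-rim hook and then an $(n-p-2)$-rim hook from $\lambda\vdash n-1$, only a single box remains, which automatically constitutes the unique $1$-rim hook required by Murnaghan--Nakayama. I therefore expect no genuine obstacle beyond bookkeeping --- transcribing the earlier estimates with $n$ replaced by $n-1$, verifying that $p>(n-1)/2$ still holds under the hypothesis $p>n/2$, and noting the absence of the $P_2$-contribution.
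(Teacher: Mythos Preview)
Your proposal is correct and follows exactly the approach the paper intends: the paper's own proof consists only of the sentence ``The proof is similar to that of Proposition \ref{2-4-1},'' and your write-up supplies precisely the details that sentence suppresses. In particular, you have correctly isolated the one genuinely new feature --- that $\chi^{(n-2,1)}(\pi)=f(\pi)-1=0$ because $\pi$ has a single fixed point, which eliminates the $P_2$-contribution and accounts for the improvement of the error term from $\mathcal{O}(\delta)$ to $\mathcal{O}\bigl(\exp(\tfrac{\log\delta}{400}\tfrac{n}{\log n})\bigr)$.
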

\noindent
The proof is similar to that of Proposition \ref{2-4-1}.

\begin{prop}
\label{2-4-3}
Let $p$ denote a prime with $\frac{1}{2}n<p\leq \frac{5}{8}n$. Let $\pi\in S_{n-2}$ be a fixed permutation with cycle type $\lambda =(p,~n-p-2)$. In addition suppose that $C\subset A_{n-2}$ is a conjugacy class of $S_{n-2}$ such that
\begin{equation*}
f(C)\leq \delta\cdot (n-2)\qquad \text{ for } \delta\in(0,\tfrac{2}{3}].
\end{equation*}
Then we have for all sufficiently large $n$
\begin{equation*}
\#\{\sigma\in S_{n-2}:~[\pi,\sigma]\in C \}=|C|(2+\mathcal{O}(\delta^{1/33})).
\end{equation*}
\end{prop}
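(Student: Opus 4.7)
The plan is to adapt the proof of Proposition \ref{2-4-1} to the simpler cycle type $(p,\ n-p-2)$ and to the full symmetric group $S_{n-2}$ (rather than an alternating subgroup). I would first redo the derivation of (\ref{N-2-4-1}). Since the argument is now carried out in $S_{n-2}$ itself -- no index-two subgroup is involved -- no splitting of the $S_{n-2}$-conjugacy class occurs, the factor $\tfrac{1}{2}$ disappears, and Lemma \ref{2-3-1} combined with $\chi(\pi^{-1})=\chi(\pi)$ yields
\begin{equation*}
\#\{\sigma\in S_{n-2}:~[\pi,\sigma]\in C\}=|C|\sum_{\chi\in\operatorname{Irr}(S_{n-2})}\frac{\chi^{2}(\pi)\chi(C)}{\chi(1)}.
\end{equation*}
The trivial and sign characters are both $+1$ at $\pi$ and at every element of $C\subset A_{n-2}$, so together they produce the main term $2|C|$.

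Next, I would redo the Murnaghan--Nakayama analysis of steps i) and ii) in the proof of Proposition \ref{2-4-1}. If $\chi^{\lambda}(\pi)\neq 0$, then $\lambda$ must admit a $p$-rim hook $h_{1}$ with $\lambda\backslash h_{1}$ itself a hook of size $n-p-2$; since $p>(n-2)/2$, one has $|H_{1,1}(\lambda)|\geq p$ and at most two admissible $h_{1}$, whence $|\chi^{\lambda}(\pi)|\leq 2$. I would then split the non-linear characters into the non-trivial hook partitions $\lambda_{k}:=(n-2-k,1^{k})$ with $1\leq k\leq n-4$, and the remaining $\lambda$ with $\chi^{\lambda}(\pi)\neq 0$. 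For the latter, an argument mimicking steps iii)--v) (distinguishing whether $(2,2)\in\lambda$ and applying the hook formula) yields $\chi^{\lambda}(1)\gg 2^{cn}$ for some absolute $c>0$, whence Lemma \ref{2-3-4} renders their combined contribution $\mathcal{O}\!\bigl(\exp\bigl(\tfrac{\log\delta}{400}\tfrac{n}{\log n}\bigr)\bigr)$, which is far smaller than $\delta^{1/33}$.

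The heart of the proof is the contribution of the hook characters $\lambda_{k}$. A brief Murnaghan--Nakayama computation pins down $\chi^{\lambda_{k}}(\pi)\in\{-1,0,1\}$, and the hook formula gives $\chi^{\lambda_{k}}(1)=\binom{n-3}{k}$. Lemma \ref{2-3-4} then yields
\begin{equation*}
\frac{|\chi^{\lambda_{k}}(C)|}{\chi^{\lambda_{k}}(1)}\leq\binom{n-3}{k}^{-\frac{\log(1/\delta)}{32\log(n-2)}}.
\end{equation*}
This sum is dominated by the extremal terms $k=1$ and $k=n-4$ (each of order $\delta^{1/32}$), with successive terms shrinking by a factor $\approx\delta^{1/32}$ as $k$ moves inward. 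Summing the resulting geometric series gives a total of order $\delta^{1/32}(1-\delta^{1/32})^{-1}$, which in turn is bounded by a constant multiple of $\delta^{1/33}$ uniformly for $\delta\in(0,\tfrac{2}{3}]$; the slight weakening from exponent $1/32$ to $1/33$ absorbs precisely this geometric dampening factor.

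The main obstacle is this last summation: Lemma \ref{2-3-4} delivers only $\delta^{1/32}$ per hook character, but one must sum $\Theta(n)$ such characters, and the resulting series has to be controlled uniformly in $\delta\in(0,\tfrac{2}{3}]$ in order to arrive at the stated exponent $1/33$ rather than the naive $1/32$.
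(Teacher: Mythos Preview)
Your proposal is correct and matches the paper's approach; the paper itself says only that the argument is analogous to Proposition~\ref{2-4-1} apart from one estimate, which it then works out in the proof of Proposition~\ref{2-4-4}---and that estimate is precisely your hook-character sum. One refinement: your ``geometric series'' description is not literally valid across the whole range $1\leq k\leq n-4$ (the ratio of successive terms tends to $1$ near $k\approx n/2$), and the paper handles this by splitting at $k\asymp n^{1/34}$: below the threshold one has $\binom{n-3}{k}\geq(n-2)^{32k/33}$, so each term is $\leq\delta^{k/33}$ and the series is genuinely geometric; above it, the crude bound $\binom{n-3}{k}\geq 2^{k}$ already makes the tail $\leq n\cdot\exp\bigl(\tfrac{\log 2}{32}\,\log\delta\cdot n^{1/34}/\log n\bigr)$, which is negligible.
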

\begin{proof} Apart from an estimate, the arguments are analogous to those in the proof of Proposition \ref{2-4-1}. We will discuss the missing detail below.\end{proof}\par

\begin{prop}
\label{2-4-4}
Let $\pi\in S_m$ be a fixed permutation with cycle type $\lambda\in\{(m),\linebreak(m-1,1), (m-2,2), (m-2,1,1)\}$. In addition suppose that $C\subset A_m$ is a conjugacy class of $S_m$ such that
\begin{equation*}
f(C)\leq \delta\cdot m\qquad \text{ for } \delta\in(0,\tfrac{2}{3}].
\end{equation*}
Then we have for all sufficiently large $m$
\begin{equation*}
\#\{\sigma\in S_m:~[\pi,\sigma]\in C \}=|C|(2+\mathcal{O}(\delta^{1/33})).
\end{equation*}
\end{prop}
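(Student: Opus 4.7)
The plan is to adapt the argument of Proposition \ref{2-4-1} to the symmetric group setting. Applying Lemma \ref{2-3-1}, and using that conjugating $\pi$ by $\sigma \in S_m$ gives an orbit of size $|K_\pi|$ with stabilizer of order $m!/|K_\pi|$, one obtains
\begin{equation*}
\#\{\sigma \in S_m : [\pi,\sigma] \in C\} = |C| \sum_{\chi \in \operatorname{Irr}(S_m)} \frac{\chi^2(\pi)\,\chi(C)}{\chi(1)}.
\end{equation*}
The factor $\tfrac{1}{2}$ appearing in (\ref{N-2-4-1}) of Proposition \ref{2-4-1} is absent because we now work over $S_m$ rather than $A_m$. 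The two linear characters each contribute $1$ to the sum (since $\chi(C) = 1$ from $C \subset A_m$, and $\chi^2(\pi) = 1$ for any linear $\chi$), producing the main term $2|C|$. It therefore suffices to show
\begin{equation*}
\sum_{\chi \in \operatorname{Irr}(S_m),\,\chi(1) > 1} \frac{\chi^2(\pi)\,\chi(C)}{\chi(1)} = \mathcal{O}(\delta^{1/33}).
\end{equation*}

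For each of the four cycle types, I would apply the Murnaghan--Nakayama rule (Lemma \ref{2-3-2}) to characterize the partitions $\mu \vdash m$ with $\chi^\mu(\pi) \neq 0$. Writing $q \in \{m,m-1,m-2\}$ for the length of the longest cycle of $\pi$, we have $q > m/2$, and hence any $q$-rim hook of $\mu$ must consume boxes from the principal hook $H_{1,1}(\mu)$; in particular $|H_{1,1}(\mu)| \geq q$. Iterating MN on the remaining cycles (of total length at most $2$) forces $\mu$ to differ from a pure hook shape by at most a bounded number of boxes. A step-(ii)-type counting argument as in Proposition \ref{2-4-1} then yields $|\chi^\mu(\pi)| \ll 1$ uniformly, and the surviving $\mu$ form a family of size $O(m^{O(1)})$.

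To estimate the character sum, I would split the surviving $\mu$ into an \emph{exceptional} set, consisting of the low-dimensional hooks $(m-1,1)$ and $(2,1^{m-2})$ together with whichever of the near-hook shapes $(m-2,1,1)$, $(3,1^{m-3})$, $(m-2,2)$, $(2,2,1^{m-4})$ are made nonzero by the particular cycle type of $\pi$, and a \emph{generic} set containing the rest. For the exceptional $\mu$, explicit formulas expressing $\chi^\mu(C)$ as a polynomial of bounded degree in the small-cycle counts of $C$ (for instance $\chi^{(m-1,1)}(C) = f(C)-1$ from step vi of Proposition \ref{2-4-1}), combined with $f(C) \leq \delta m$, yield a contribution of $\mathcal{O}(\delta)$ per term. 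For the generic $\mu$ the hook formula (Lemma \ref{2-3-3}) produces a lower bound $\chi^\mu(1) \geq m^c$ for a suitable constant $c$, and Lemma \ref{2-3-4} then gives
\begin{equation*}
\frac{|\chi^\mu(C)|}{\chi^\mu(1)} \leq \chi^\mu(1)^{-\log(m/f(C))/(32\log m)} \ll \delta^{c/32}.
\end{equation*}
Choosing the split so that $c/32 \geq 1/33$ absorbs the polynomially-many generic contributions into $\mathcal{O}(\delta^{1/33})$.

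The principal obstacle, relative to Proposition \ref{2-4-1}, is that $\pi$ lacks a second long cycle: for cycle type $(m)$, every hook $\mu = (m-k,1^k)$ satisfies $\chi^\mu(\pi) = (-1)^k \neq 0$, and analogous proliferations occur for the other three cycle types. The smallest dimension entering Lemma \ref{2-3-4} is therefore only polynomial in $m$, which is precisely what degrades the saving from $\delta$ to the fractional power $\delta^{1/33}$. The technical heart of the proof is the case-by-case MN analysis needed to pin down the exceptional set for each cycle type and to verify the uniform polynomial lower bound $\chi^\mu(1) \geq m^c$ on the generic set.
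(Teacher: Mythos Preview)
Your reduction to the character sum and the identification of the main term $2|C|$ are correct, and you rightly note that for cycle type $(m)$ every hook $(m-k,1^k)$ has $\chi^\mu(\pi)\neq 0$. The gap is in how you dispose of the generic set. You bound each generic term uniformly by $\delta^{c/32}$ via $\chi^\mu(1)\geq m^c$ for a \emph{fixed} constant $c$, and then sum over the $O(m)$ surviving hooks. But $m\cdot\delta^{c/32}$ is not $\mathcal{O}(\delta^{1/33})$: for any fixed $\delta\in(0,\tfrac{2}{3}]$ it diverges as $m\to\infty$. Enlarging the exceptional set by any bounded number of shapes, or raising $c$, does not help; the number of generic terms still grows with $m$ while your per-term bound does not decay with $m$.

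The paper's fix is to use the $k$-dependent dimension rather than a uniform lower bound. For $\lambda=(m)$ one has exactly $\chi^{(m-k,1^k)}(1)=\binom{m-1}{k}$, and Lemma~\ref{2-3-4} is applied termwise:
\begin{equation*}
\sum_{k=1}^{m-2}\binom{m-1}{k}^{\frac{\log\delta}{32\log m}}
\ll \sum_{1\leq k\leq m^{1/34}}\Bigl(\frac{m-1}{k}\Bigr)^{k\cdot\frac{\log\delta}{32\log m}}
+\sum_{m^{1/34}\leq k\leq (m-1)/2}2^{\,k\cdot\frac{\log\delta}{32\log m}}.
\end{equation*}
In the first range $(m-1)/k\gg m^{33/34}$, so the $k$-th summand is at most $\delta^{k/33}$ and the series sums to $\delta^{1/33}/(1-\delta^{1/33})\ll\delta^{1/33}$; the second range is bounded by $m\exp\bigl(\tfrac{\log 2}{32}\log\delta\cdot m^{1/34}/\log m\bigr)$, which is negligible. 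No separate treatment of low-dimensional characters via explicit formulas is used here; the exponent $1/33$ comes from the $k=1$ term of the resulting geometric series, not from a dichotomy between exceptional and generic shapes. The other three cycle types are handled the same way after the Murnaghan--Nakayama analysis pins down the surviving $\mu$.
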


\begin{proof} We sketch the argument for $\lambda=(m)$. The other cases are similar. Because of the cycle type of $\pi$, the Murnaghan-Nakayama rule and the hook formula yield:\\
\textit{Let $\lambda\vdash m$ with $\chi^{\lambda}(\pi)\neq 0$. Then we get:\\
i) The Ferrers diagram of $\lambda$ does not contain the box $(2,2)$.\\
ii) ~~$|\chi^{\lambda}(\pi)|= 1$.\\
iii) ~$\chi^{\lambda}(1)=\binom{m-1}{m_1}$, where $m_1$ is defined as in the proof of Proposition \ref{2-4-1}.}\\
Analogous to (\ref{N-2-4-1}), we have
\begin{align*}
\#\{\sigma\in S_m:~[\pi,\sigma]\in C \}&=|C|\sum_{\chi\in\operatorname{Irr}(S_m)}\frac{\chi^2(\pi)\cdot\chi(C)}{\chi(1)}\\
&=|C|\Biggr(2+\sum_{\substack{\chi\in\operatorname{Irr}(S_m)\\\chi(1)\neq 1}}\frac{\chi^2(\pi)\cdot\chi(C)}{\chi(1)}\Biggl).
\end{align*}
We now use Lemma \ref{2-3-4} and i)-iii) to estimate the error term:
\begin{align*}
\Bigr|\sum_{\substack{\chi\in\operatorname{Irr}(S_m)\\\chi(1)\neq 1}}\frac{\chi^2(\pi)\cdot\chi(C)}{\chi(1)}\Bigl|~&\leq~\sum_{m_1=1}^{m-2}\binom{m-1}{m_1}^{\frac{\log\delta}{32\log m}}\\
&\ll\sum_{1\leq m_1\leq\frac{1}{2}(m-1)}\left(\frac{m-1}{m_1}\right)^{\frac{\log\delta}{32\log m}m_1}.
\end{align*}
We split the summation over $m_1$ in two ranges and consider separately the subsums. We obtain
\begin{equation*}
\sum_{1\leq m_1\leq m^{1/34}}\left(\frac{m-1}{m_1}\right)^{\frac{\log\delta}{32\log m}m_1}\leq\sum_{m_1=1}^{\infty}\delta^{\frac{m_1}{33}}=\frac{\delta^{1/33}}{1-\delta^{1/33}}\ll\delta^{1/33}
\end{equation*}
and
\begin{equation*}
\sum_{m^{1/34}\leq m_1\leq \frac{1}{2}(m-1)}\left(\frac{m-1}{m_1}\right)^{\frac{\log\delta}{32\log m}m_1}\leq m\cdot\exp\left(\log \delta\cdot\frac{\log 2}{32}\frac{m^{1/34}}{\log m}\right),
\end{equation*}
which is far smaller than necessary.\end{proof}\par

\section{Completion of the Proof}

Choose $\pi\in A_n$ and $C\subset A_n$ as in Theorem \ref{2-2-3}. We shall show that counting permutations $\sigma\in A_n$ such that $[\pi,\sigma]\in C$ and $\langle\pi,\sigma\rangle=A_n$ can be reduced to the problems dealt with in the last section.\\
In this context, the following result on permutation groups due to C. Jordan (cf. \cite[Theorem 3.3E]{Di1} or \cite[Theorem 13.9]{Wi2}) is very useful.

\begin{lem}
\label{2-5-1}
Let $G$ be a primitive subgroup of $S_n$. Suppose that $G$ contains at least one permutation which is a $p$-cycle for a prime $p\leq n-3$. Then either $G=S_n$ or $G=A_n$ holds.
\end{lem}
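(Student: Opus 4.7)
The plan is to use the classical method of Jordan sets together with primitivity.

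First, I would set $\Delta$ to be the support of the given $p$-cycle $\sigma\in G$, so $|\Delta|=p$ and $\langle\sigma\rangle$ acts transitively on $\Delta$ while fixing $\Omega\setminus\Delta=\{1,\dots,n\}\setminus\Delta$ pointwise. In particular $\Delta$ is a \emph{Jordan set} for $G$, meaning the pointwise stabilizer $G_{(\Omega\setminus\Delta)}$ acts transitively on $\Delta$. Because $p\leq n-3$, the complement $\Omega\setminus\Delta$ has at least three elements, which will be crucial at the end.

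Second, I would establish the standard extension principle: if $\Delta$ is a Jordan set with $1<|\Delta|<n$ and $G$ is primitive, then there exists $g\in G$ with $g(\Delta)\cap\Delta\neq\emptyset$ and $g(\Delta)\neq\Delta$, and for any such $g$ the union $\Delta\cup g(\Delta)$ is again a Jordan set. The existence of such $g$ uses primitivity: otherwise the $G$-translates of $\Delta$ would partition $\Omega$ (or be trivially arranged), making $\Delta$ a nontrivial block; closure of the Jordan property under such unions is a direct computation with the subgroup generated by the two conjugate pointwise stabilizers, whose union of orbits contains $\Delta\cup g(\Delta)$ and acts transitively on it.

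Third, by iterating this extension, starting from the initial Jordan set $\Delta$ of size $p$, I would produce Jordan sets of strictly increasing cardinality, eventually reaching some $\Delta'$ with $|\Omega\setminus\Delta'|\in\{1,2\}$ (one must stop before $\Delta'=\Omega$). Translating "Jordan set of size $n-k$" into transitivity statements, this will give that $G$ is at least $(n-p+1)$-transitive on $\Omega$; since $p\leq n-3$, this means $G$ is at least $4$-transitive.

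Finally, with $G$ highly transitive and containing a $p$-cycle, I would conclude $G\supseteq A_n$ by an elementary argument: a large Jordan set combined with a prime-order cycle allows one to produce a $3$-cycle inside $G$ (for instance, after moving $\sigma$ by conjugation so that its support meets a suitable Jordan set in exactly three points, the commutator-type construction inside the pointwise stabilizer yields a $3$-cycle); a classical theorem then says a transitive group containing a $3$-cycle is $A_n$ or $S_n$. The main obstacle I anticipate is the bookkeeping in the extension step, in particular verifying carefully that primitivity actually forces the existence of a translate of $\Delta$ overlapping $\Delta$ nontrivially but not equal to it, and then extracting an explicit $3$-cycle at the end rather than invoking a classification; these are exactly the points where \cite{Di1} and \cite{Wi2} carry out the technical work that I would quote rather than redo.
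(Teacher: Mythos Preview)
The paper does not supply its own proof of this lemma; it simply cites \cite[Theorem~3.3E]{Di1} and \cite[Theorem~13.9]{Wi2}. Your outline is exactly the classical Jordan--set argument developed in those references, and since you explicitly defer the delicate steps (the extension lemma for Jordan sets under primitivity and the extraction of a $3$-cycle) to the same sources, your proposal coincides with the paper's treatment.
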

\noindent
We now show how this Lemma can be applied to our problem.

\begin{lem}
\label{2-5-2}
Let $G$ be a transitive subgroup of $S_n$. Suppose that there exists a $\pi\in G$ containing a $p$-cycle for a prime $p$ with $\frac{1}{2}n<p\leq n-3$. Then either $G=S_n$ or $G=A_n$ holds.
\end{lem}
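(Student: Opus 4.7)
The plan is to reduce this to Lemma \ref{2-5-1} (Jordan's theorem) by showing that $G$ is primitive and that $G$ itself (not merely some element containing a $p$-cycle) contains an honest $p$-cycle.

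First I would extract a $p$-cycle in $G$. Since $\pi$ contains a $p$-cycle of length $p>n/2$, all remaining cycles of $\pi$ have length strictly less than $p$; because $p$ is prime, none of these other lengths is divisible by $p$. Let $m$ be the least common multiple of the remaining cycle lengths. Then $\gcd(p,m)=1$, so $\pi^{m}\in G$ acts as the identity off the support of the $p$-cycle and as a nontrivial power of the $p$-cycle on its support; that power is itself a $p$-cycle since $p$ is prime. Hence we may assume $G$ contains a genuine $p$-cycle $\rho$.

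Next I would show $G$ is primitive. Assume for contradiction that $G$ admits a nontrivial system of imprimitivity, i.e.\ a partition of $\{1,\dots,n\}$ into blocks of common size $d$ with $1<d<n$ and $d\mid n$, so in particular $d\leq n/2$. Pick any point $x$ in the support $S$ of $\rho$, and let $B$ be the block containing $x$. Since $G$ permutes the blocks and $\rho\in G$, the orbit of $B$ under $\langle\rho\rangle$ has length $k$ dividing $|\rho|=p$; so $k=1$ or $k=p$. If $k=1$ then $\rho(B)=B$, hence the full $\rho$-orbit of $x$ (which has length $p$) is contained in $B$, giving $d\geq p>n/2$, a contradiction. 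If $k=p$ then the number of blocks is at least $p$, so $n/d\geq p>n/2$, forcing $d<2$, again a contradiction. Thus no nontrivial block system exists and $G$ is primitive.

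Finally, since $p\leq n-3$ by hypothesis, Lemma \ref{2-5-1} applies to the primitive group $G$ and the $p$-cycle $\rho\in G$, yielding $G=S_n$ or $G=A_n$, as desired. The only mildly delicate point is the power-of-$\pi$ step at the beginning; once a bare $p$-cycle is in hand, the primitivity argument is a short numerical dichotomy driven entirely by $p>n/2$ and the primality of $p$.
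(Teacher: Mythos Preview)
Your proof is correct and follows essentially the same approach as the paper's: both extract a genuine $p$-cycle by raising $\pi$ to the lcm of the other cycle lengths, establish primitivity via the dichotomy forced by $p>n/2$, and then invoke Jordan's theorem (Lemma~\ref{2-5-1}). The only cosmetic difference is that you extract the $p$-cycle first and run the primitivity argument on it, whereas the paper argues primitivity directly from $\pi$ and extracts the $p$-cycle afterward.
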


\begin{proof} i) We claim: $G$ \textit{is primitive.}\\
This can be seen as follows: Let us assume $G$ is imprimitive. Then there exist blocks $Y_1,\ldots,Y_k$ for $G$ such that $Y_1,\ldots,Y_k$ form a partition of $\{1,\ldots,n\}$, $2\leq |Y_1|\leq n-1$ and $|Y_1|=|Y_i|$ for all $i=2,\ldots,k$. Let $a$ be an element contained in the $p$-cycle of $\pi$. Then we have $a\in Y_{l_0},~\pi(a)\in Y_{l_1},\ldots,\pi^{p-1}(a)\in Y_{l_{p-1}}$ for pairwise distinct $l_i\in\{1,\ldots,k\}$. Hence, we get $k\geq p$. This yields the contradiction $n=|Y_1|k\geq 2p>n.$\\
ii) Let $d:=\operatorname{lcm}\{m:~\pi\text{ contains a cycle of length } m<p\}$. Obviously, $\pi^d\in G$ is a $p$-cycle. In view of i) and Lemma \ref{2-5-1} we can conclude: $G=S_n$ or $G=A_n$.\end{proof}\par
\noindent
The last Lemma yields immediately
\begin{cor}
\label{2-5-3}
Let $p$ denote a prime with $\frac{1}{2}n<p\leq n-3$. Suppose that $\pi\in A_n$ contains a $p$-cycle and let $C$ be a conjugacy class of $S_n$. Then we get
\begin{align*}
\#&\{\sigma\in A_n:~[\pi,\sigma]\in C \wedge \langle\pi,\sigma\rangle=A_n\}\\
&=\#\{\sigma\in A_n:~[\pi,\sigma]\in C\}-\#\{\sigma\in A_n:~[\pi,\sigma]\in C \wedge (\langle\pi,\sigma\rangle\text{ is not transitive})\}.
\end{align*}
\end{cor}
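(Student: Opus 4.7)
The plan is to write the set $\{\sigma\in A_n:[\pi,\sigma]\in C\}$ as a disjoint union of two subsets, according to whether $\langle\pi,\sigma\rangle$ is transitive or not, and then identify the ``transitive'' subset with the ``generates $A_n$'' subset via Lemma \ref{2-5-2}. Concretely, I would first partition
\begin{equation*}
\{\sigma\in A_n:[\pi,\sigma]\in C\}=T\sqcup N,
\end{equation*}
where $T$ consists of those $\sigma$ for which $\langle\pi,\sigma\rangle$ is transitive and $N$ consists of those $\sigma$ for which $\langle\pi,\sigma\rangle$ is not transitive. Clearly $|\{\sigma\in A_n:[\pi,\sigma]\in C\}|=|T|+|N|$, and the second term on the right-hand side of the corollary is exactly $|N|$. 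Hence it suffices to show
\begin{equation*}
T=\{\sigma\in A_n:[\pi,\sigma]\in C\wedge\langle\pi,\sigma\rangle=A_n\}.
\end{equation*}

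The inclusion $\supseteq$ is immediate, since $A_n$ acts transitively on $\{1,\ldots,n\}$ for $n\geq 3$. For the reverse inclusion, take $\sigma\in T$ and set $G:=\langle\pi,\sigma\rangle$. Since $\pi,\sigma\in A_n$, we have $G\subseteq A_n$. By hypothesis $\pi$ contains a $p$-cycle with $\tfrac{1}{2}n<p\leq n-3$, and $G$ is transitive, so Lemma \ref{2-5-2} applied to $G$ forces $G\in\{S_n,A_n\}$. Combined with $G\subseteq A_n$ this yields $G=A_n$, as required.

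There is no real obstacle here beyond checking that $A_n$ is transitive (trivial for $n\geq 3$, which is implicit in the setting $\tfrac{1}{2}n<p\leq n-3$) and that the two subsets $T$ and $N$ really are disjoint and exhaust the whole set, which is a matter of definition. The entire content of the corollary is the observation that, under the hypothesis on $\pi$, Lemma \ref{2-5-2} rules out every intermediate possibility: a $\sigma\in A_n$ with $[\pi,\sigma]\in C$ either fails to make $\langle\pi,\sigma\rangle$ transitive, or else $\langle\pi,\sigma\rangle$ must already equal $A_n$. This reduces the generation condition $\langle\pi,\sigma\rangle=A_n$ to the purely combinatorial condition of transitivity, which is precisely what we need in order to combine this with the counts from Propositions \ref{2-4-1}--\ref{2-4-4} in the subsequent proof of Theorem \ref{2-2-3}.
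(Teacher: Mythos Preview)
Your argument is correct and is exactly the reasoning the paper has in mind: the corollary is stated as an immediate consequence of Lemma~\ref{2-5-2}, and your partition into transitive/non-transitive $\sigma$ together with the observation $\langle\pi,\sigma\rangle\subseteq A_n$ is precisely how that lemma is applied.
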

\noindent
In the above equation we are able to compute the first term on the right hand side (see Proposition \ref{2-4-1}). In order to calculate the second one, we have to analyze the condition $\langle\pi,\sigma\rangle$ \textit{is not transitive.}

\begin{defn}
For a cycle $\tau=(i_1,\ldots,i_m)$ define $M(\tau):=\{i_1,\ldots,i_m\}$.\\
Let $\pi_1\cdot\ldots\cdot\pi_k$ be the cycle decomposition for $\pi\in S_n$ (including $1$-cycles) and set $\Omega:=\{1,2,\ldots,n\}$. Define $\Gamma_H:=\bigcup_{i\in H}M(\pi_i)\subset \Omega$ when $H\subset \{1,\ldots,k\}$.\\
For a non-empty set $\Omega$ let $Sym(\Omega)$ denote the symmetric group on $\Omega$. When $\Gamma\subset\Omega$ we identify $Sym(\Gamma)$ with the subgroup of $Sym(\Omega)$ consisting of all $\tau\in Sym(\Omega)$ with $\operatorname{supp}(\tau)\subset\Gamma$.
\end{defn}

\begin{lem}
\label{2-5-4}
Let $\pi_1\cdot\ldots\cdot\pi_k$ be the cycle decomposition for $\pi\in S_n$ (including $1$-cycles) and let $\sigma\in S_n$. Then $\langle\pi,\sigma\rangle$ is not transitive if and only if there exists a non-empty proper subset $H$ of $K:=\{1,\ldots,k\}$ and a tuple $(\sigma',\sigma'')\in Sym(\Gamma_H)\times Sym(\Gamma_{K\backslash H})$ such that $\sigma=\sigma'\sigma''$.
\end{lem}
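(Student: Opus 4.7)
The plan is to prove both implications directly, using the characterization of non-transitive actions in terms of invariant subsets.

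\textbf{The ``if'' direction.} Suppose we are given a non-empty proper subset $H \subsetneq K$ and a decomposition $\sigma = \sigma'\sigma''$ with $\sigma' \in Sym(\Gamma_H)$ and $\sigma'' \in Sym(\Gamma_{K\setminus H})$. Since the supports of $\sigma'$ and $\sigma''$ are disjoint, $\sigma$ leaves $\Gamma_H$ invariant. By the definition of $\Gamma_H$ as a union of $\pi$-cycles, $\pi$ also leaves $\Gamma_H$ invariant. Therefore $\Gamma_H$ is a $\langle\pi,\sigma\rangle$-invariant set, and since it is neither empty nor all of $\Omega$, the group $\langle\pi,\sigma\rangle$ fails to be transitive.

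\textbf{The ``only if'' direction.} Assume $\langle\pi,\sigma\rangle$ is not transitive. Pick any orbit $\Gamma \subsetneq \Omega$ of this action; then $\Gamma$ is a non-empty proper subset of $\Omega$ that is invariant under both $\pi$ and $\sigma$. Invariance under $\pi$ forces $\Gamma$ to be a union of $\pi$-cycles: that is, $\Gamma = \Gamma_H$ for some non-empty proper $H \subsetneq K$. Now $\sigma(\Gamma_H) = \Gamma_H$ automatically gives $\sigma(\Gamma_{K\setminus H}) = \Gamma_{K\setminus H}$ as well, so the restrictions $\sigma' := \sigma|_{\Gamma_H}$ (extended by the identity on $\Gamma_{K\setminus H}$) and $\sigma'' := \sigma|_{\Gamma_{K\setminus H}}$ (extended by the identity on $\Gamma_H$) lie in $Sym(\Gamma_H)$ and $Sym(\Gamma_{K\setminus H})$ respectively, and $\sigma = \sigma'\sigma''$.

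\textbf{Main obstacle.} Honestly, there is no serious obstacle: the statement is essentially a reformulation of what it means for a subset to be $\langle\pi,\sigma\rangle$-invariant, combined with the trivial observation that a $\pi$-invariant subset of $\Omega$ is a union of $\pi$-cycles. The only point that requires a moment's care is making sure the extensions of $\sigma|_{\Gamma_H}$ and $\sigma|_{\Gamma_{K\setminus H}}$ by the identity are compatible with the convention that $Sym(\Gamma) \subseteq Sym(\Omega)$ consists of permutations whose support lies in $\Gamma$, which is exactly the convention introduced in the definition just above the lemma.
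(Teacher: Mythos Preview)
Your proof is correct. Both directions are handled cleanly, and the ``only if'' part --- taking an orbit $\Gamma$ of $\langle\pi,\sigma\rangle$, noting that $\pi$-invariance forces $\Gamma=\Gamma_H$ for some non-empty proper $H\subsetneq K$, and then restricting $\sigma$ --- is exactly the right elementary argument.

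It is worth pointing out that your route is \emph{not} the one the paper takes. The paper declares the ``$\Leftarrow$'' direction obvious (as you do), but for ``$\Rightarrow$'' it proves a stronger auxiliary statement by induction: starting from two points $s,t$ in different orbits, it builds increasing families $L_m\subset\{1,\ldots,l\}$ of $\sigma$-cycle indices and disjoint subsets $H_m,H'_m\subset K$ of $\pi$-cycle indices satisfying five conditions a)--e), adding one $\sigma$-cycle at a time until $L_{l-1}=L$. This inductive construction ultimately produces the desired $H$, but it is considerably more laborious than your one-line observation that any orbit of $\langle\pi,\sigma\rangle$ is automatically a union of $\pi$-cycles. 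Your argument is shorter and more transparent; the paper's version, while correct, carries extra bookkeeping that is not needed for the lemma as stated.
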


\begin{proof} "$\Leftarrow$": Obvious.\\
"$\Rightarrow$": This implication is the result of the following stronger statement: \textit{Let $\sigma_1\cdot\ldots\cdot\sigma_l$ be the cycle decomposition for $\sigma$ (including $1$-cycles). Let $s,t\in\{1,\ldots,n\}$ such that $\tau(s)\neq t$ for all $\tau\in\langle\pi,\sigma\rangle$. Then: For all $m$ with $1\leq m\leq l-1$ there exists non-empty subsets $H_m,H'_m\subset K$ and a subset $L_m\subset L:=\{1,\ldots,l\}$ such that\\
a) $\forall j\in L_m: ~~M(\sigma_j)\subset\bigcup_{i\in H_m}M(\pi_i) ~~\vee ~~ M(\sigma_j)\subset\bigcup_{i\in H'_m}M(\pi_i)$\\
b) $|L_m|\geq m+1$\\
c) $H_m\cap H'_m=\emptyset$\\
d) $L_m\neq L\Rightarrow~ (\forall i\in H_m~\exists\tau\in\langle\pi,\sigma\rangle:~\tau(s)\in M(\pi_i))$\\
e) $L_m\neq L\Rightarrow~ (\forall i\in H'_m~\exists\tau\in\langle\pi,\sigma\rangle:~\tau(t)\in M(\pi_i))$.}\\
We prove this claim by induction: If $m=1$ we have $s\in M(\sigma_j)$ and $t\in M(\sigma_{j'})$ for $j\neq j'$. Set $L_1:=\{j,j'\}$, $H_1:=\{i\in K:~M(\sigma_j)\cap M(\pi_i)\neq\emptyset\}$ and $H'_1:=\{i\in K:~M(\sigma_{j'})\cap M(\pi_i)\neq\emptyset\}$. Then a) - e) follow.\\
Now let $1\leq m\leq l-2$ and suppose that the statement is true for $m$. The case $L_m=L$ is trivial. Hence, let us assume that $L_m\subsetneq L$.\\
Case 1: $\exists j_0\in L\setminus L_m:~~M(\sigma_{j_0})\cap\bigcup_{i\in H_m}M(\pi_i)\neq\emptyset ~~\vee ~~ M(\sigma_{j_0})\cap\bigcup_{i\in H'_m}M(\pi_i)\neq\emptyset$.\\
Without loss of generality assume that $M(\sigma_{j_0})\cap\bigcup_{i\in H_m}M(\pi_i)\neq\emptyset$. Define $L_{m+1}:=L_m\cup\{j_0\}$, $H_{m+1}:=H_m\cup\{i\in K:~M(\sigma_{j_0})\cap M(\pi_i)\}\neq\emptyset$ and $H'_{m+1}:=H'_m$. Then a) - e) result from the induction hypothesis and some easy computations.\\
Case 2: $\forall j\in L\setminus L_m:~~M(\sigma_j)\cap\bigcup_{i\in H_m}M(\pi_i)=\emptyset ~~\wedge ~~ M(\sigma_j)\cap\bigcup_{i\in H'_m}M(\pi_i)=\emptyset$.\\
Set $L_{m+1}:=L,~ H_{m+1}:=K\setminus H'_m$ and $H_{m+1}:=H'_m$. Obviously a) - e) are fulfilled.\end{proof}\par

\begin{defn}
Denote by $[S_n]$ the set of conjugacy classes of $S_n$.\\
Let $C\in[S_n]$, $C_1\in[S_k]$ and $C_2\in[S_{n-k}]$. Denote $m_l,~s_l$ and $t_l$ the number of $l$-cycles in $C,~C_1$ and $C_2$, respectively. We say $~C=C_1C_2~$ iff $~m_l=s_l+t_l$ for all $l=1,\ldots,n$.
\end{defn}

Finally, we draw our attention to $\sum |C_1||C_2|$, where the sum runs over all $(C_1,C_2)\in [S_k]\times[S_{n-k}]$ satisfying $C_1C_2=C$ for a fixed conjugacy class $C$ of $S_n$. In this context, the following Lemma is very useful.

\begin{lem}
\label{2-5-5}
Let $C$ be a conjugacy class of $S_n$ with cycle type $(1^{m_1},2^{m_2},\ldots,n^{m_n})$, where $m_l$ is the number of $l$-cycles in $C$. Then we obtain
\begin{equation*}
|C|=\frac{n!}{1^{m_1}m_1!\cdot2^{m_2}m_2!\cdot\ldots\cdot n^{m_n}m_n!}.
\end{equation*}
\end{lem}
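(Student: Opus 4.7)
The plan is to invoke the orbit--stabilizer theorem applied to the conjugation action of $S_n$ on itself. Since $C$ is precisely the orbit of any $\pi\in C$ under conjugation, we have $|C|=|S_n|/|Z_{S_n}(\pi)|$, where $Z_{S_n}(\pi)$ denotes the centralizer of $\pi$. The task therefore reduces to computing the order of the centralizer of a permutation of cycle type $(1^{m_1},2^{m_2},\ldots,n^{m_n})$.

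To compute $|Z_{S_n}(\pi)|$, I would use the observation that $\sigma\pi\sigma^{-1}=\pi$ if and only if $\sigma$ permutes the cycles of $\pi$ while respecting their cyclic structure. For each length $l$ with $m_l\geq 1$, the permutation $\sigma$ restricted to the union of the $l$-cycles of $\pi$ may permute the $m_l$ cycles among themselves in $m_l!$ ways and, independently, cyclically rotate each of these cycles in $l$ ways, contributing a factor $l^{m_l}$. Since cycles of distinct lengths act on disjoint subsets of $\{1,\dots,n\}$, the contributions multiply, yielding
\begin{equation*}
|Z_{S_n}(\pi)|=\prod_{l=1}^{n} l^{m_l}\,m_l!,
\end{equation*}
from which the claimed formula is immediate.

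As an alternative, one can argue purely combinatorially: write down a ``template'' consisting of $m_l$ empty cycles of length $l$ for each $l$, and fill the $n$ slots with a permutation of $\{1,\dots,n\}$; this gives $n!$ filled templates, each representing a single element of $C$. Each such element is produced by exactly $\prod_l l^{m_l}m_l!$ different fillings, since within any cycle of length $l$ there are $l$ admissible starting points and the $m_l$ cycles of length $l$ may be listed in any order. Dividing yields the formula.

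No genuine obstacle arises; the only care needed is to keep track of the two independent symmetries (cyclic rotation inside a single cycle, and permutation among cycles of equal length), and to observe that these symmetries act independently across distinct length classes.
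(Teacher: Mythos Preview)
Your argument is correct and is the standard proof of this classical formula; both the centralizer computation via orbit--stabilizer and the combinatorial double-counting you describe are valid and well known. The paper itself does not prove the lemma at all but simply cites Sagan's textbook (Proposition~1.1.1 in \cite{Sa1}), so there is no meaningful comparison of approaches to make---you have supplied a self-contained proof where the paper defers to a reference.
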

\begin{proof} See \cite[Proposition 1.1.1]{Sa1}.\end{proof}

\noindent
We now observe:

\begin{lem}
\label{2-5-6}
Suppose that $C$ is a conjugacy class of $S_n$ such that $f(C)\leq \delta\cdot n$ for $\delta\in(0,\tfrac{1}{4}]$ and let $n$ be sufficiently large.\\
1) Then we have for $\frac{1}{2}n<k\leq\frac{5}{8}n$
\begin{equation*}
\sum_{\substack{C_1\in [S_k]\\C_2\in[S_{n-k}]\\C=C_1C_2}}|C_1||C_2|\leq\exp\left(-\frac{3}{50}n\right)|C|.
\end{equation*}
2) Furthermore, we obtain
\begin{equation*}
\sum_{\substack{C_1\in [S_{n-1}]\\C_2\in[S_1]\\C=C_1C_2}}|C_1|\leq\delta |C|\qquad\text{and}\qquad\sum_{\substack{C_1\in [S_{n-2}]\\ C_2\in [S_2]\\C=C_1C_2}}|C_1|\leq 2\delta^2|C|.
\end{equation*}
\end{lem}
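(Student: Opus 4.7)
The plan is to convert each ratio $|C_1||C_2|/|C|$ into a product of binomial coefficients using Lemma \ref{2-5-5}. If $C$ has cycle type $(1^{m_1},\ldots,n^{m_n})$, and $C_1,C_2$ have cycle types with $s_l$ and $t_l$ cycles of length $l$ respectively, then the condition $C=C_1C_2$ forces $s_l+t_l=m_l$. A direct calculation from Lemma \ref{2-5-5} then yields
\begin{equation*}
\frac{|C_1||C_2|}{|C|} \;=\; \binom{n}{k}^{-1}\prod_{l=1}^{n}\binom{m_l}{s_l}.
\end{equation*}
Summing over the allowable cycle-type decompositions reduces both parts to a combinatorial estimate on such products.

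For part 1, I would drop the hard constraint $\sum_l l\,s_l = k$ and bound the full sum by the unrestricted product
\begin{equation*}
\sum_{C_1C_2=C}|C_1||C_2| \;\leq\; \frac{|C|}{\binom{n}{k}}\prod_{l=1}^{n}2^{m_l} \;=\; \frac{|C|}{\binom{n}{k}}\cdot 2^{\sum_l m_l}.
\end{equation*}
Since every cycle of length $\geq 2$ uses at least two points, the total number of cycles satisfies $\sum_l m_l \leq m_1 + (n-m_1)/2 = (n+m_1)/2 \leq (1+\delta)n/2 \leq 5n/8$, using $m_1=f(C)\leq \delta n$ and $\delta\leq 1/4$. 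On the other hand, Stirling's formula provides a lower bound $\binom{n}{k} \geq c\cdot 2^{nH(k/n)}/\sqrt{n}$ for an absolute constant $c>0$, where $H(x)=-x\log_2 x-(1-x)\log_2(1-x)$; since $H$ is unimodal on $[0,1]$ with maximum $1$ at $x=1/2$ and $H(5/8)\approx 0.954$, this gives $\binom{n}{k}\geq 2^{0.95\,n}/n$ uniformly for $n/2<k\leq 5n/8$. The quotient is then at most $n\cdot 2^{(5/8 - 0.95)n} \leq n\cdot 2^{-n/4}$, which is much smaller than $\exp(-3n/50)$ for all sufficiently large $n$.

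For part 2, the sums are finite and in every case $|C_2|=1$. When $n-k=1$ the only possibility is $C_2=(1)$, forcing $s_1=m_1-1$, so the ratio formula gives $|C_1|/|C| = m_1/n \leq \delta$. When $n-k=2$ there are two options: $C_2=(1,1)$ contributes $|C_1|/|C| = m_1(m_1-1)/(n(n-1))$, and $C_2=(2)$ contributes $|C_1|/|C| = 2m_2/(n(n-1))$; summing and using $m_1\leq \delta n$ together with the trivial bound $2m_2\leq n$ yields
\begin{equation*}
\sum |C_1| \;\leq\; \frac{\delta^2 n^2 + n}{n(n-1)}\,|C| \;\leq\; 2\delta^2\,|C|
\end{equation*}
for $\delta$ fixed and $n$ sufficiently large, since the additive correction $1/(n-1)$ is absorbed into $\delta^2$.

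The one non-routine step is the numerical comparison in part 1: the crude cycle-count bound loses a factor $2^{5n/8}$, which must be beaten by $\binom{n}{k}^{-1}$. The inequality $H(5/8) > 5/8$ holds with comfortable slack, and it is precisely this margin that drives the choice of the upper endpoint $5n/8$ for $k$ in the hypothesis; relaxing that endpoint much further would invalidate the exponential gain.
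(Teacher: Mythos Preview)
Your proof is correct and follows essentially the same route as the paper's own argument: both convert $|C_1||C_2|/|C|$ into $\binom{n}{k}^{-1}\prod_l\binom{m_l}{s_l}$ via Lemma~\ref{2-5-5}, bound the product by $2^{\sum m_l}\leq 2^{5n/8}$ using $f(C)\leq\delta n\leq n/4$, and beat this with a Stirling lower bound on $\binom{n}{k}$; part~2 is handled identically in both. The only cosmetic difference is that the paper states the Stirling bound as $\binom{n}{k}\geq(41/25)^n$ (a cruder constant that lands just above the target $e^{-3n/50}$), whereas you phrase it via the entropy $H(5/8)\approx 0.954$ and obtain a comfortably stronger exponent.
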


\begin{proof} Denote $(1^{m_1},2^{m_2},\ldots,n^{m_n})$ the cycle type of $C$.\\
We begin with 1): If $C$ contains a $l$-cycle with $l>k$, then the considered sum is equal to $0$. Therefore, we assume that $C$ has cycle type $(1^{m_1},2^{m_2},\ldots,k^{m_k},(k+1)^0,\ldots n^0)$. By computing the cardinality of $C$ and of conjugacy classes $C_1\in [S_k]$, $C_2\in [S_{n-k}]$ with $C=C_1C_2$ via Lemma \ref{2-5-5}, we obtain
\begin{align*}
\sum_{\substack{C_1\in [S_k]\\C_2\in [S_{n-k}]\\C=C_1C_2}}|C_1||C_2|~&\leq~|C|\binom{n}{k}^{-1}\!\sum_{\substack{(i_1,\ldots ,i_{n-k})\\0\leq i_j\leq m_j}}\prod_{l=1}^{n-k}\binom{m_l}{i_l}\\
&=~|C|\binom{n}{k}^{-1}2^{(m_1+\ldots +m_{n-k})}.
\end{align*}
The Stirling formula yields the inequality $\binom{n}{k}\geq \left(\frac{41}{25}\right)^n$ for $\frac{1}{2}n<k\leq \frac{5}{8}n$. In addition, we have $\sum_{i=1}^nm_i\leq f(C)+\frac{n-f(C)}{2}\leq \frac{1}{2}n(1+\delta)\leq \frac{5}{8}n$. Hence, our estimate follows.\\
We now show the second statement: Let $i\in\{1,2\}$. We again apply Lemma \ref{2-5-5}: By calculating the cardinality of $C$ and of conjugacy classes $C_1\in[S_{n-i}]$ such that there exists $C_2\in[S_i]$ with $C=C_1C_2$, we get
\begin{equation*}
\sum_{\substack{C_1\in [S_{n-1}]\\C_2\in[S_1]\\C=C_1C_2}}|C_1|\leq |C|\frac{m_1}{n}\leq\delta |C|
\end{equation*}
and
\begin{align*}
\sum_{\substack{C_1\in [S_{n-2}]\\C_2\in [S_2]\\C=C_1C_2}}|C_1|&\leq |C|\frac{1}{n(n-1)}\Bigl(m_1(m_1-1)+2m_2\Bigr)\\
&\leq |C|\left(\delta^2+\frac{1}{n-1}\right)\\
&\leq 2\delta^2|C|.
\end{align*}
This proves our claim.\end{proof}

Now we are able to give the
\begin{proof} [Proof of Theorem \ref{2-2-3}]
Let $\pi_1\cdot\ldots\cdot\pi_k$ be the cycle decomposition of $\pi$ (including $1$-cycles) and set $K:=\{1,\ldots,k\}$. Then Lemma \ref{2-5-4} yields
\begin{align*}
B:&=\#\{\sigma\in A_n:~[\pi,\sigma]\in C \wedge (\langle\pi,\sigma\rangle\text{ is not transitive})\}\\
&\leq \sum_{\substack{\{H,\,K\backslash H\}\\H\subsetneq K,\,H\neq \emptyset}}\#\{(\sigma',\sigma'')\in Sym(\Gamma_H)\times Sym(\Gamma_{K\backslash H}):~[\pi,\sigma'\sigma'']\in C\}.
\end{align*}

Let $H$ be a proper non-empty subset of $K$. Define $\pi':=\prod_{i\in H}\pi_i$ and $\pi'':=\prod_{i\in K\backslash H}\pi_i$. If $(\sigma',\sigma'')\in Sym(\Gamma_H)\times Sym(\Gamma_{K\backslash H})$ we get $[\pi,\sigma'\sigma'']=[\pi',\sigma']\!\cdot\![\pi'',\sigma'']$. Obviously, we have $[\pi',\sigma']\in Sym(\Gamma_H)$ and $[\pi'',\sigma'']\in Sym(\Gamma_{K\backslash H})$. Therefore, $[\pi,\sigma'\sigma'']\in C$ holds if and only if there exists conjugacy classes $C_1\subset Sym(\Gamma_H)$ and $C_2\subset Sym(\Gamma_{K\backslash H})$ with $[\pi',\sigma']\in C_1$, $[\pi'',\sigma'']\in C_2$ and $C=C_1C_2$.\par\smallskip

Case 1: $n\equiv 1(2)$. By assumption we have $k=3$ and $K=\{1,2,3\}$. For $\lambda\vdash m$, let $\pi_{_{\lambda}}\in S_m$ be the a fixed permutation with cycle type $\lambda$. The above considerations yield
\begin{align*}
B&\leq \sum_{\substack{C_1\in [S_{n-2}]\\C_2 \in [S_2]\\C=C_1C_2}}\#\{\sigma\in S_{n-2}:~[\pi_{_{(p,n-p-2)}},\sigma]\in C_1\}\cdot\#\{\sigma\in S_2:~[\pi_{_{(2)}},\sigma]\in C_2\}\\
&~\quad+\hspace{-0.3cm}\sum_{\substack{C_1\in [S_{p}]\\C_2\in[S_{n-p}]\\C=C_1C_2}}\hspace{-0.3cm}\#\{\sigma\in S_{p}:~[\pi_{_{(p)}},\sigma]\in C_1\}\cdot\#\{\sigma\in S_{n-p}:~[\pi_{_{(n-p-2,2)}},\sigma]\in C_2\}\\
&~\quad+\hspace{-0.4cm}\sum_{\substack{C_1\in [S_{p+2}]\\C_2 \in[S_{n-p-2}]\\C=C_1C_2}}\hspace{-0.45cm}\#\{\sigma\in S_{p+2}:~[\pi_{_{(p,2)}},\sigma]\in C_1\}\cdot\#\{\sigma\in S_{n-p-2}:~[\pi_{_{(n-p-2)}},\sigma]\in C_2\}\\
&=\mathcal{O}\Biggl(\sum_{\substack{C_1\in [S_{n-2}]\\C_2\in [S_2]\\C=C_1C_2}}|C_1|+\sum_{\substack{C_1\in [S_{p}]\\C_2\in [S_{n-p}]\\C=C_1C_2}}|C_1||C_2|+\sum_{\substack{C_1\in [S_{p+2}]\\C_2 \in[S_{n-p-2}]\\C=C_1C_2}}|C_1||C_2|\Biggr)\\
&=\mathcal{O}(\delta^2|C|).
\end{align*}
For $i\in\{1,2,3\}$ we have $\frac{1}{2}n<p+i\leq\frac{5}{8}n$. In addition, $f(C_1)\leq f(C)$ is valid for $C=C_1C_2$. Therefore, the last but one equality follows from Proposition \ref{2-4-3} and Proposition \ref{2-4-4}. The last step results from Lemma \ref{2-5-6}.\par
Case 2: $n\equiv 0(2)$. With argumentation similar to case 1 we obtain $B=\mathcal{O}(\delta|C|)$.\par\smallskip
Hence, it follows from Corollary \ref{2-5-3} and Proposition \ref{2-4-1}
\begin{equation*}
\#\{\sigma\in A_n:~[\pi,\sigma]\in C \wedge \langle\pi,\sigma\rangle=A_n\}=|C|\left(1+\mathcal{O}(\delta)\right).
\end{equation*}
So we are done.\end{proof}\par

\par\bigskip\noindent
\textbf{Author information}\\
Stefan-Christoph Virchow, Institut für Mathematik, Universität Rostock\\
Ulmenstr. 69 Haus 3, 18057 Rostock, Germany\\
E-mail: stefan.virchow@uni-rostock.de

\end{document}